\newtheorem{lemma}{Lemma}
\newtheorem{conjecture}{Conjecture}
\newtheorem{theorem}{Theorem}
\newtheorem{corollary}{Corollary}
\newtheorem{observation}{Observation}
\newcommand{\floor}[1]{\lfloor #1 \rfloor}
\newcommand{\ceil}[1]{\lceil #1 \rceil}
\newcommand{\iitgaddress}{IIT Guwahati, Assam 781039, India.}
\newcommand{\nusaddress}{National University of Singapore, Singapore.}
\newcommand{\iitgemail}[1]{Email: \texttt{#1@iitg.ernet.in}}
\newcommand{\nusmail}[1]{Email:\texttt{#1@u.nus.edu}}
\title{On the Rectilinear Crossing Number of Complete Uniform Hypergraphs}
\author{
Anurag Anshu\footnote{\nusaddress\ \nusmail{a0109169}} 
\and
Rahul Gangopadhyay\footnote{\iitgaddress\ \iitgemail{r.gangopadhyay}} 
\and
Saswata Shannigrahi\footnote{\iitgaddress\ \iitgemail{saswata.sh}}
\footnote{Corresponding Author}
\and 
Satyanarayana Vusirikala\footnote{\iitgaddress\ 
\iitgemail{vusirikala}}
}
\begin{document}
\maketitle
\begin{abstract}
 
In this paper, we consider a generalized version of the rectilinear crossing
number problem of drawing complete graphs on a plane. The minimum number of
crossing pairs of hyperedges in the $d$-dimensional rectilinear drawing of a
$d$-uniform hypergraph is known as the $d$-dimensional rectilinear crossing
number of the hypergraph. The currently best-known lower bound on the
$d$-dimensional rectilinear crossing number of a complete $d$-uniform hypergraph
with $n$ vertices in general position in $\mathbb{R}^d$ is
$\Omega(\frac{2^d}{\sqrt{d}} \log d) {n \choose 2d}$. In this paper, we improve
this lower bound to $\Omega(2^d) {n \choose 2d}$. We also consider the special
case when all the vertices of a $d$-uniform hypergraph are placed on the 
$d$-dimensional moment curve.
For such complete $d$-uniform hypergraphs with $n$ vertices, 
we show that the number of crossing pairs of hyperedges is 
$\Theta(\frac{4^d}{\sqrt{d}}) {n \choose 2d}$.

 \vspace{.3cm}
    \noindent \textbf{Keywords:} Geometric Hypergraph; Crossing Simplices; 
Ham-Sandwich Theorem; Gale Transform; Moment Curve
\end{abstract}

\section{Introduction}
\label{introduction}
Graph drawing in a plane has been a well-studied area of research for many 
years with 
applications in various fields of computer science, e.g., CAD, database design 
and circuit schematics \cite{TT, TT1}. One particularly interesting drawing of 
a 
graph is the rectilinear drawing, 
defined as an embedding of the graph 
on a plane $({\Re}^2)$ with vertices placed in general position (i.e., no three 
vertices are collinear) 
and edges connecting corresponding vertices as straight line segments.
The rectilinear crossing number of a graph is defined as the minimum number of 
crossing pairs of edges over all rectilinear drawings of the graph \cite{SA}. 
Let us denote the rectilinear crossing number of a complete graph with 
$n$ vertices by ${\overline {cr}_2}(K_n)$. The currently best-known lower and 
upper bounds on ${\overline {cr}_2}(K_n)$ are $0.37997\dbinom{n}{4}+\Theta(n^3)$ 
and $0.380473\dbinom{n}{4}+\Theta(n^3)$, respectively \cite{Alb,FML}. \\
The concept of a hypergraph is a natural extension to the notion of a graph. A 
hypergraph 
$H$ is defined as a pair $H=(V,E)$, where $V$ is the set of vertices and $E$ 
is a set of non-empty subsets of $V$ called hyperedges. A hypergraph in which 
each hyperedge contains exactly $d$ vertices is called a $d$-uniform 
hypergraph. 
Throughout this paper, we consider $d \geq 2$ because the hyperedge set of a 
$1$-uniform hypergraph is only a collection of sets containing one vertex each. 
Let us denote the complete $d$-uniform hypergraph with $n$ vertices by $K^d_n$. 
A \textit{$d$-dimensional rectilinear drawing} of a $d$-uniform hypergraph with 
$n\geq2d$ vertices is 
defined as an embedding of the hypergraph in ${\Re}^d$ with vertices placed in 
general position (i.e., no $d+1$ of the vertices lie on a common 
hyperplane) and hyperedges drawn as $(d-1)$-simplices spanned by the $d$ 
vertices in the corresponding hyperedges \cite{SA}. In a $d$-dimensional 
rectilinear 
drawing of a hypergraph, two hyperedges are said to be \textit{intersecting} if 
they contain a common point in their relative interiors \cite{DP}. 
Two intersecting hyperedges are said to be \textit{crossing} if they are vertex 
disjoint, as shown in Figure \ref{fig:crossing}. This definition is extended to 
define the crossing between a $u$-simplex and a $v$-simplex such that $ 0 \leq 
u\leq d-1$, $0 \leq v \leq d-1$ and all the vertices ($0$-faces) belonging to 
both these simplices 
are in general position in $\mathbb{R}^d$. Note that a $u$-simplex (similarly, 
$v$-simplex) in $\mathbb{R}^d$ is defined as the convex hull $Conv(A)$ of a set 
$A$ of 
$u+1$ points ($v+1$ points) in general position in $\mathbb{R}^d$. Such a 
$u$-simplex and a $v$-simplex are said to be crossing if 
they are vertex 
disjoint and contain a common point in their relative interiors. The 
\textit{$d$-dimensional rectilinear crossing number} of a $d$-uniform 
hypergraph is defined as the minimum number of crossing pairs of hyperedges 
over 
all d-dimensional rectilinear drawings of the hypergraph. Let $\overline 
{cr}_d(H)$ denote the $d$-dimensional rectilinear crossing number 
of a $d$-uniform hypergraph $H$.  In this paper, we use $c_d$ to 
denote ${\overline {cr}_d}({K^d_{2d}})$. It follows that 
${\overline{cr}_d}({K^d_n}) \geq c_d\dbinom{n}{2d}$, as  
the set of $d$-dimensional rectilinear crossings created 
by the $\dbinom{2d}{d}$ hyperedges formed by a particular set of $2d$ vertices 
is disjoint from the set of $d$-dimensional 
rectilinear crossings created 
by the $\dbinom{2d}{d}$ hyperedges formed by another set of $2d$ vertices. 
The best-known lower bound on $c_d$ is $\Omega(\dfrac{{2^d}\log 
{d}}{\sqrt{d}})$ \cite{SA}. Currently, the only known upper bound on $c_d$ is 
the trivial bound  
$c_d \leq \dbinom{2d}{d}=\Theta(\dfrac{4^d}{\sqrt{d}})$. This significant 
gap 
between the currently best-known lower and upper bounds on $c_d$ shows 
that at least one of these bounds can be improved. In this 
paper, we work towards improving the lower bound on $c_d$. \par

\begin{figure}[h]
    \centering
    \includegraphics[width = 0.75\textwidth]{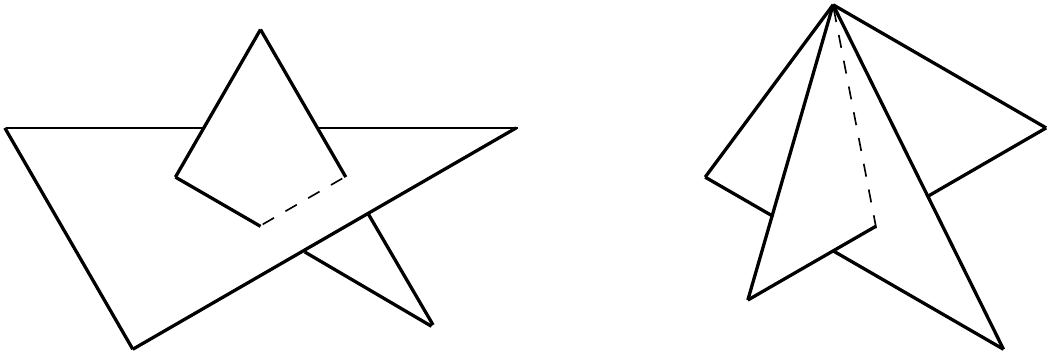}
    \caption{(left) crossing simplices in $\mathbb{R}^3$, (right) intersecting 
simplices in $\mathbb{R}^3$}
    \label{fig:crossing}
\end{figure}

Similar to the rectilinear drawing, the convex drawing of a graph is 
also an 
active area of research \cite{SCH}. The convex drawing of a graph $G$ 
with $n$ vertices is 
defined as an embedding of the graph $G$ in $\mathbb{R}^2$ with all its $n$ 
vertices 
placed as the vertices of a convex $n$-gon (note that it also ensures that the 
vertices are in general position) and edges connecting the 
corresponding vertices drawn as straight line segments \cite{SSZ}. Let 
$cr^*(G)$ denote the convex crossing number of $G$, defined as the minimum 
number of 
crossing pairs of edges over all convex drawings of $G$ \cite{SZ2}. A \textit{$d$-dimensional convex 
drawing} 
of a $d$-uniform hypergraph with 
$n\geq2d$ vertices is 
defined as an embedding of the hypergraph in ${\Re}^d$ with vertices 
placed in convex as well as general
position and hyperedges 
drawn as $(d-1)$-simplices spanned by the $d$ 
vertices in the corresponding hyperedges. The \textit{$d$-dimensional convex 
crossing number} of a $d$-uniform hypergraph is defined as the minimum number 
of 
crossing pairs of hyperedges over all d-dimensional convex drawings of the 
hypergraph.   
Let $cr^*_d(H)$ denote the $d$-dimensional convex crossing 
number 
of a $d$-uniform hypergraph $H$. In this paper, we use $c^*_d$ to denote 
${cr^*_d}({K^d_{2d}})$.
Since it is easy to see that ${cr^*_d}(K^d_n) \geq c^*_d\dbinom{n}{2d}$, 
finding the value of $c^*_d$ is 
also an interesting problem. The above-mentioned result of Anshu and 
Shannigrahi \cite{SA} implies that the lower bound on 
$c^*_d$ is 
$\Omega(\dfrac{{2^d}\log 
{d}}{\sqrt{d}})$. On the other hand,
a trivial upper bound on $c^*_d$ 
is $O(\dfrac{4^d}{\sqrt{d}})$. We are not aware of 
better lower and upper bounds on $c^*_d$ in the literature. We address a 
special case of the problem in this paper, which 
is when all the vertices of 
$K^d_{2d}$ 
lie 
on the \textit{$d$-dimensional moment curve} 
$\gamma = \{(t,t^2,t^3,\ldots,t^d): t \in\mathbb{R}\}$. Our motivation to work 
with 
this special case is the Upper Bound 
Theorem~\cite{MM} which states that
the \textit{$d$-dimensional cyclic polytope} (i.e., the polytope 
whose vertices are all placed on the $d$-dimensional moment curve) has  
the maximum number of faces among all $d$-dimensional convex polytopes having $n$ 
vertices. In this paper, we explore if the placement of $n$ vertices on the $d$-dimensional moment curve
also maximizes the number of crossing pairs of hyperedges in a 
$d$-dimensional convex drawing of 
$K^d_{n}$. We use $c^m_d$ to denote the number of crossing pairs of hyperedges 
of 
$K^d_{2d}$, when all the vertices of 
$K^d_{2d}$ are placed as the vertices of a $d$-dimensional cyclic polytope.

\subsection{Previous Works}

As mentioned earlier, the best-known lower bound on $c_d$ is 
$\Omega(\dfrac{{2^d}\log{d}}{\sqrt{d}})$.
 The proof uses Gale Transform to reduce 
the crossing number problem to a linear separation problem. The Gale Transform is a technique to transform a sequence of 
$m$ 
points $P=\big<p_1, p_2, \ldots, p_m\big>$ in $\mathbb{R}^d$ to a sequence of 
$m$ 
vectors 
$D(P)=\big<v_1, v_2, \ldots, v_m\big>$ in $\mathbb{R}^{m-d-1}$ for $m \geq d+1$. 
(In 
Section 
\ref{tused}, we 
discuss the Gale Transform and its properties in detail.) 
A \textit{linear separation} of a vector sequence $D(P)$ is the partitioning of 
$D(P)$ by 
a 
hyperplane, passing through the origin, into two sets. A linear 
separation of 
$D(P)$ is called \textit{proper} if one of the sets contains $ \lceil 
\small{\dfrac{m}{2}}\rceil$ vectors and the other contains $ 
\lfloor\small{\dfrac{m}{2}}\rfloor$ vectors. For a given set of $d+4$ points in 
$\mathbb{R}^d$, the Gale 
Transform ensures that there exists a bijection between the crossing pairs 
of $\lfloor\small{\dfrac{d+2}{2}}\rfloor$ and 
$\lceil\small{\dfrac{d+2}{2}}\rceil$-simplices in $\mathbb{R}^d$ and the proper 
linear 
separations of $d+4$ vectors in $\mathbb{R}^3$ \cite{JM}.
In order to calculate the lower bound on $c_d$, Anshu and Shannigrahi \cite{SA} 
chose a set of $d+4$ vertices from the set of $2d$ vertices of $K^d_{2d}$ 
in $\mathbb{R}^d$. The Gale Transform of these $d+4$ vertices gives $d+4$ 
vectors in general position in $\mathbb{R}^3$ (i.e., any subset containing $3$ 
vectors 
spans 
$\mathbb{R}^3$). To get a proper linear separation of this set of $d+4$ vectors 
in 
$\mathbb{R}^3$, they used the Ham-Sandwich Theorem stated below.

\vspace{.1cm}

\noindent\textbf {Ham-Sandwich Theorem.}~\cite{JM}
\textit{There exists a $(d-1)$-hyperplane $h$ which simultaneously bisects $d$ 
finite point sets $P_1, P_2, \ldots, P_d$ in $\mathbb{R}^d$, such that each of 
the 
open half-spaces created by $h$ contains at most 
$\lfloor\small{\dfrac{|P_i|}{2}}\rfloor$ 
points for each of the sets $P_i, 1 \leq i \leq d$.}

\vspace{.1cm}

Using this Theorem, Anshu and Shannigrahi \cite{SA} proved the existence of 
$\Theta(\log d)$ distinct proper 
linear separations of the set of $d+4$ vectors mentioned above. 
As discussed earlier, each proper linear separation of $d+4$ 
vectors in $\mathbb{R}^3$ corresponds to a crossing between $ 
\lfloor\small{\dfrac{d+2}{2}}\rfloor$ and $ 
\lceil\small{\dfrac{d+2}{2}}\rceil$ simplices in $\mathbb{R}^d$. They extended 
the 
crossings between the lower dimensional simplices (crossings between $ 
\lfloor\small{\dfrac{d+2}{2}}\rfloor$ and $ 
\lceil\small{\dfrac{d+2}{2}}\rceil$-simplices) to the 
crossings between $(d-1)$-simplices to get the bound 
$c_d=\Omega(\dfrac{{2^d}\log 
{d}}{\sqrt{d}})$. In particular, they showed that 
$c_4 \geq 4$. They also constructed an arrangement of $8$ vertices of a complete
$4$-uniform hypergraph in $\mathbb{R}^4$ having $4$ crossing pairs of 
simplices. This arrangement established that $c_4 = 4$. This is the first 
non-trivial result on $c_d$ for small values of $d$, as it is easy to 
observe that $c_2 = 0 $ and $c_3 = 1$. \par

\subsection{Our Contribution}
\label{ourc}
    
In this paper, we first improve the lower bound on 
$c_d$  in Section~\ref{gencase}.
\begin{theorem}
 \label{thm11}
 The $d$-dimensional rectilinear crossing number of a complete $d$-uniform 
hypergraph with $2d$ vertices in 
general position in $\mathbb{R}^d$ is $\Omega (2^d)$.
\end{theorem}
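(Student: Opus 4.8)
The plan is to pass to the Gale transform of all $2d$ vertices at once and then to amplify a single low-dimensional crossing into exponentially many crossings of full $(d-1)$-simplices. First I would set up the Gale reduction exactly as the excerpt does for $d+4$ points, but applied to the whole vertex set: the Gale transform of the $2d$ points in general position in $\mathbb{R}^d$ produces $2d$ vectors $v_1,\dots,v_{2d}$ in general position in $\mathbb{R}^{d-1}$, and since the lifted points $(p_i,1)$ all lie in the open halfspace $x_{d+1}>0$, this vector configuration is \emph{totally cyclic} (the origin lies in the interior of their convex hull). Under the Gale correspondence a balanced Radon partition $A\sqcup B$ of the points, i.e.\ a crossing pair of $(d-1)$-simplices, is the same object as a proper central linear separation of the $v_i$: a hyperplane through the origin with $\{v_i:i\in A\}$ strictly on one side and $\{v_j:j\in B\}$ strictly on the other. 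Hence it suffices to exhibit $\Omega(2^d)$ distinct balanced central separations of the $v_i$.

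Second, I would isolate the \emph{enlargement lemma} that drives the exponential count: if two vertex-disjoint simplices $\mathrm{Conv}(A)$ and $\mathrm{Conv}(B)$ cross, then adding new distinct vertices to each keeps them crossing, because the convex hulls only grow while vertex-disjointness is preserved, and general position keeps the enlarged vertex sets affinely independent so they remain genuine simplices. Starting from one \emph{balanced} base crossing between two $(s-1)$-simplices, where the dimension count $2(s-1)\ge d$ forces $s\ge\ceil{\frac{d+2}{2}}$, I would distribute the remaining $2d-2s$ vertices among the two sides so as to keep $|A|=|B|=d$; every such distribution yields a crossing, and distinct distributions yield distinct balanced partitions. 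With the minimal base $s=\ceil{\frac{d+2}{2}}$ this gives $\binom{d-2}{\,(d-2)/2\,}=\Theta\!\big(2^{d}/\sqrt d\big)$ crossings from a single base, which already recovers the shape of the previous bound once multiplied by a supply of base crossings.

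Third — and this is the crux — a single base stalls at $\Theta(2^{d}/\sqrt d)$: insisting that the final partition be \emph{exactly} balanced replaces free binary choices by a balanced distribution of the leftover vertices (the matched pairs $\{a_i,b_i\}$ split between the two sides), and the central binomial coefficient carries an unavoidable $1/\sqrt d$ factor. This $\sqrt d$ is precisely the gap between the old bound $\Omega(2^{d}\log d/\sqrt d)$ and the target $\Omega(2^{d})$. To close it I would manufacture $\Omega(\sqrt d)$ \emph{essentially independent} base crossings rather than the $\Theta(\log d)$ of the previous argument: I would apply the Ham-Sandwich Theorem on the Gale side to produce balanced central separations supported on disjoint core subsets of the $v_i$, and then argue that partitions extending different cores cannot coincide because they already differ on the core. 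The main obstacle is exactly this bookkeeping: first guaranteeing that enough balanced base crossings exist for an \emph{adversarial} configuration (here the total cyclicity of the Gale vectors and general position are what force balanced separations to exist), and then controlling the overcount so that the $\Omega(\sqrt d)$ bases genuinely multiply the $\Theta(2^{d}/\sqrt d)$ extensions up to $\Omega(2^{d})$ rather than overlapping. Establishing this near-disjointness of the extension families is the delicate step, and getting it to net exactly a $\sqrt d$ improvement is where I expect the real work to lie.
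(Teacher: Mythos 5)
Your setup (Gale transform plus the enlargement lemma) matches the paper's machinery, and your diagnosis of the $\sqrt d$ bottleneck is accurate, but the step you yourself flag as ``where the real work lies'' is a genuine gap, not mere bookkeeping. You need $\Omega(\sqrt d)$ essentially independent \emph{balanced} base crossings, and no mechanism is given for producing them: Ham-Sandwich-type arguments on the Gale side are exactly what the earlier work of Anshu and Shannigrahi used, and they yield only $\Theta(\log d)$ distinct proper separations. Asking for $\Omega(\sqrt d)$ of them, supported on disjoint cores inside a configuration of only $2d$ vectors in $\mathbb{R}^{d-1}$, is not something the stated tools deliver, so the proposal as written stalls at essentially the old bound rather than reaching $\Omega(2^d)$.

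The paper closes the gap by abandoning the insistence that the base crossing be balanced. It fixes a single subset $A$ of $d+3$ vertices, whose Gale transform lives in $\mathbb{R}^2$, obtains one proper separation by Ham-Sandwich, and then rotates the separating line about the origin. Each time a vector crosses the line a new, generally \emph{less} balanced, partition $\{l_{k+1}^+,l_{k+1}^-\}$ appears, with each side of size at least $\lfloor (d-2k+3)/2\rfloor$; by Lemma \ref{bjection} and Lemma \ref{extension} this extends to at least $\binom{d-3}{\lfloor (d-2k-3)/2\rfloor}$ crossing pairs of $(d-1)$-simplices, and extensions of different partitions are automatically distinct because they already differ on $A$. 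Summing over $k$ gives $\binom{d-3}{\lfloor (d-3)/2\rfloor}+\binom{d-3}{\lfloor (d-5)/2\rfloor}+\cdots+\binom{d-3}{1}=\Theta(2^{d-3})$, i.e., essentially half of the full binomial sum, so the $1/\sqrt d$ penalty of any single central binomial coefficient never enters. If you want to salvage your plan, the lesson is that the missing factor of $\sqrt d$ should come from summing extension counts over $\Theta(d)$ separations of \emph{varying} balance on one core, not from multiplying one balanced count by many independent cores.
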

\begin{corollary}
 ${\overline {cr}_d}({K^d_n}) = \Omega (2^d)\dbinom{n}{2d}$.
\end{corollary}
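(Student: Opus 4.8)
The plan is to bound $c_d=\overline{cr}_d(K^d_{2d})$ from below for an \emph{arbitrary} general-position placement of the $2d$ vertices, running the argument through the Gale transform exactly as in the previous work but extracting more from the same reduction. First I would fix any $(d+4)$-element subset $S$ of the $2d$ vertices. Its Gale dual is a sequence of $d+4$ vectors in general position in $\mathbb{R}^3$, and the general Gale/Radon correspondence says that a central-plane separation of these vectors into parts of sizes $m$ and $d+4-m$ is in bijection with a crossing pair consisting of an $(m-1)$-simplex and a $(d+3-m)$-simplex spanned by the vertices of $S$ (the balanced case $m=\lceil (d+4)/2\rceil$ being the statement quoted in the introduction). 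I would then invoke the crossing-extension lemma underlying the previous bound: such a crossing on $S$ can be completed to a crossing of two $(d-1)$-simplices on all $2d$ vertices by distributing the remaining $d-4$ vertices, sending $d-m$ of them to the first simplex and $m-4$ to the second, in $\binom{d-4}{d-m}$ ways, each yielding a genuine crossing.

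The key point is that, for a fixed $S$, the map sending a full crossing to the pair (its restriction to $S$, its restriction to the complement of $S$) is injective, so these completions are pairwise distinct full crossings. Summing over the crossings supported on $S$ gives the clean lower bound
\[
c_d \;\ge\; \sum_{m}\,\big(\text{number of size-}m\text{ central separations of the Gale vectors of }S\big)\,\binom{d-4}{d-m}.
\]
The previous bound keeps only the balanced term $m=\lceil (d+4)/2\rceil$, where $\binom{d-4}{d-m}=\Theta(2^d/\sqrt d)$ and Ham--Sandwich supplies $\Theta(\log d)$ separations, giving $\Omega(2^d\log d/\sqrt d)$. My improvement is to harvest the \emph{near}-balanced sizes as well: for every $m$ with $|m-\tfrac{d+4}{2}|\le\sqrt d$ one still has $\binom{d-4}{d-m}=\Theta(2^d/\sqrt d)$, so it suffices to exhibit $\Omega(\sqrt d)$ crossings whose split sizes lie in this window.

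To produce them I would study the level function $g(x)=\#\{i:\langle x,v_i\rangle>0\}$ on $S^2$, where $v_1,\dots,v_{d+4}$ are the Gale vectors. Because they positively span $\mathbb{R}^3$ (the origin lies in the interior of their convex hull), $g$ satisfies $g(-x)=d+4-g(x)$; and since $g$ changes by $\pm1$ across each great circle of the dual arrangement and the arrangement is cellwise connected, its image is an interval of integers symmetric about $(d+4)/2$. A dichotomy then closes the argument. If this interval has radius at least $\sqrt d$, every one of the $\Theta(\sqrt d)$ near-balanced sizes is attained, giving at least one crossing per size and hence $\Omega(\sqrt d)$ near-balanced crossings. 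If instead the radius is below $\sqrt d$, then every cell of the arrangement of $d+4$ great circles sits at a near-balanced level; since this arrangement has $\Theta(d^2)$ cells by Euler's formula, there are $\Omega(d^2)$ near-balanced separations, far more than enough. In either case the displayed sum is $\Omega(\sqrt d)\cdot\Theta(2^d/\sqrt d)=\Omega(2^d)$.

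I expect the main obstacle to be the geometric bookkeeping rather than the counting. One must verify the crossing-extension lemma in exactly the form needed here, namely that adding the remaining vertices to both simplices keeps their relative interiors intersecting; this is a transversality statement, available because the two simplices on $S$ already have dimensions summing to $d+2>d$ and the extension only raises both dimensions. One must also confirm the size-$m$ Gale/Radon correspondence for all near-balanced $m$, not merely the balanced case. A secondary point is to establish, uniformly over all general-position configurations, both that the image of $g$ is an interval symmetric about $(d+4)/2$ and the exact cell count of the central great-circle arrangement, so that both branches of the dichotomy are valid with no configuration left unhandled.
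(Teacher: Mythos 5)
Your proposal is correct, and it establishes the key inequality $c_d=\Omega(2^d)$ by a genuinely different route from the paper (the passage from $c_d$ to $\overline{cr}_d(K^d_n)\ge c_d\binom{n}{2d}$, which you leave implicit, is the same disjointness-over-$2d$-subsets observation the paper states in the Introduction). The paper's proof of Theorem \ref{thm11} Gale-dualizes a set of $d+3$ vertices into $d+3$ vectors in $\mathbb{R}^2$ and rotates a line through the origin: starting from a Ham--Sandwich-balanced separation it picks up one separation at each successively more unbalanced level, and the bound is the sum $\binom{d-3}{\lfloor (d-3)/2\rfloor}+\binom{d-3}{\lfloor (d-5)/2\rfloor}+\cdots+\binom{d-3}{1}=\Theta(2^d)$ of the corresponding extension factors. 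You instead keep the $(d+4)$-point, $\mathbb{R}^3$ setup of Anshu--Shannigrahi and extract $\Omega(\sqrt d)$ \emph{near-balanced} separations via the level function on $S^2$ and a dichotomy on the radius of its image, each worth $\Theta(2^d/\sqrt d)$ completions. Both arguments rest on the same pillars --- Lemma \ref{bjection}, Lemma \ref{extension}, and the injectivity of the map (separation on $S$, completion) $\mapsto$ full crossing --- and your accounting checks out: general position makes every three Gale vectors span $\mathbb{R}^3$ (Lemma \ref{genposi}), so no two great circles coincide and no three are concurrent, the image of the level function on the cells is an interval whose endpoints sum to $d+4$, and the window $|m-(d+4)/2|\le\sqrt d$ keeps $m$ in the range $4\le m\le d$ required by the bijection and the extension lemma while keeping $\binom{d-4}{d-m}=\Theta(2^d/\sqrt d)$ uniformly. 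What each buys: the paper's planar sweep is shorter and needs no case analysis; your dichotomy is heavier machinery for the same $\Omega(2^d)$, though its second branch would actually yield $\Omega(d^{3/2}2^d)$ when it occurs. One cosmetic fix: the antipodal identity $g(-x)=d+4-g(x)$ for generic $x$ follows directly from $\langle -x,v_i\rangle=-\langle x,v_i\rangle$ and needs no positive-spanning hypothesis.
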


\noindent We derive the exact value of $c_d^m$ in
Section~\ref{cncp}. For a sufficiently large $d$, it implies that there exists a constant $c > 0$ such that the number of crossing pairs of 
hyperedges when all $n$ vertices of $K^d_{n}$ are placed on the $d$-dimensional moment curve is at least $c$ times the number of crossing pairs of
hyperedges in any $d$-dimensional convex drawing of $K^d_{n}$.

\begin{theorem}
\label{momntup1}
\[c_d^m= \begin{cases} 
      
\dbinom{2d-1}{d-1}-\sum\limits_{i=1}^{\frac{d}{2}}\dbinom{d}{i}\dbinom{
d-1 } { i-1} & if~d~is~even  \\
      
\dbinom{2d-1}{d-1} 
 
-1-\sum\limits_{i=1}^{\floor{\frac{d}{2}}}\dbinom{d-1}{i}\dbinom{
d}{i} & if~d~is~odd\\ 
   \end{cases}\\
\]
   
$~~~~~~~~~~~~~~~~~~~~~~~~~= \Theta\Bigg(\dfrac{4^d}{\sqrt{d}}\Bigg)$

\end{theorem}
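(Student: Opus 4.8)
The plan is to identify crossing pairs of hyperedges with a purely combinatorial condition on how the two vertex classes interleave along the curve, and then to reduce the problem to a binomial enumeration. Since any two crossing hyperedges are vertex-disjoint and each spans $d$ of the $2d$ vertices, a crossing pair uses up all $2d$ vertices; hence crossing pairs are in bijection with the unordered partitions of the vertex set into two $d$-element classes $A$ and $B$ whose simplices $\mathrm{Conv}(A)$ and $\mathrm{Conv}(B)$ share a relative interior point. The total number of such balanced partitions is $\tfrac12\binom{2d}{d}=\binom{2d-1}{d-1}$, so it suffices to count the partitions that do \emph{not} cross and subtract. Label the vertices $1,2,\dots,2d$ in the order of their moment-curve parameters $t_1<\cdots<t_{2d}$; because the order type of points on the moment curve depends only on this ordering (the Vandermonde determinants fix all orientations), the quantity $c_d^m$ is well defined.

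First I would set up the crossing criterion. The simplices on $A$ and $B$ share a relative interior point exactly when there is an affine dependence with $\sum_i c_i t_i^k=0$ for $k=0,1,\dots,d$ having the prescribed sign pattern $\sigma\in\{+,-\}^{2d}$ ($+$ on $A$, $-$ on $B$); this is precisely a proper linear separation of the Gale dual in the language of the excerpt. I would then apply the Stiemke/Gordan theorem of the alternative: such a strictly-signed $c$ exists if and only if there is \emph{no} polynomial $q$ of degree at most $d$, not vanishing identically on the nodes, with $\sigma_i\,q(t_i)\ge 0$ for all $i$. The key lemma is a Descartes/Chebyshev-type sign count: a nonzero polynomial of degree at most $d$ can realize at most $d$ sign changes along the increasing nodes $t_1<\cdots<t_{2d}$ (counted with nodal multiplicities). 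Consequently, if $\sigma$ has at least $d+1$ sign changes, no certificate $q$ exists and the simplices cross; conversely, the same lemma applied to the dependence vector $c$ itself shows that any genuine crossing forces at least $d+1$ sign changes in $\sigma$. Thus a balanced partition crosses if and only if the $A/B$ colouring of $1,\dots,2d$ has at least $d+1$ sign changes.

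With the criterion in hand, the count is a block decomposition. Reading the colouring left to right, a partition with exactly $\beta$ maximal monochromatic blocks has $\beta-1$ sign changes; the blocks alternate colour, so distributing the $d$ vertices of each class among its prescribed number of blocks gives, for the unordered partition, the count $\binom{d-1}{a-1}^2$ when $\beta=2a$ and $\binom{d-1}{a}\binom{d-1}{a-1}$ when $\beta=2a+1$. Summing these over the non-crossing range $\beta\le d+1$, subtracting from $\binom{2d-1}{d-1}$, and collapsing the resulting partial sums with Pascal's rule $\binom{d}{i}=\binom{d-1}{i}+\binom{d-1}{i-1}$ yields the single closed forms stated for even and odd $d$ (the stray $-1$ in the odd case being the trivial $\beta=2$ block count $\binom{d-1}{0}^2$). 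I would verify the two reductions separately and cross-check against the base cases $c_2^m=1$ and $c_3^m=3$.

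Finally, for the asymptotics I would exploit the symmetry $\mathrm{count}(\beta)=\mathrm{count}(2d+2-\beta)$ of the block distribution (immediate from the two formulas), which centres it at $\beta=d+1$. This gives the compact identity $c_d^m=\tfrac12\big(\binom{2d-1}{d-1}-\mathrm{count}(d+1)\big)$, where the central term is a product of two central binomial coefficients of order $\Theta(4^d/d)$. Since $\binom{2d-1}{d-1}=\Theta(4^d/\sqrt d)$ dominates the central term by a factor $\Theta(\sqrt d)$, we obtain $c_d^m=\tfrac12\binom{2d-1}{d-1}\,(1-o(1))=\Theta(4^d/\sqrt d)$. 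The main obstacle is the crossing criterion: its sufficiency direction hinges on the sign-change lemma in the delicate case where $q$ vanishes at nodes, where a simple root absorbs one sign change but a tangential (double) root can absorb two, so the multiplicity bookkeeping must be carried out carefully to keep the total root count at most $d$.
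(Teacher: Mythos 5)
Your proposal is correct and its combinatorial core coincides with the paper's: both reduce the problem to counting, among the $\binom{2d-1}{d-1}$ balanced bipartitions of the $2d$ moment-curve vertices, those whose red/blue colouring has at most $d$ alternations, and then subtract. Your block-by-block count (exactly $\beta$ monochromatic blocks contributing $\binom{d-1}{a-1}^2$ or $\binom{d-1}{a}\binom{d-1}{a-1}$) collapses via Pascal's rule to exactly the paper's sums $\sum_i\binom{d}{i}\binom{d-1}{i-1}$ (even $d$) and $1+\sum_i\binom{d-1}{i}\binom{d}{i}$ (odd $d$), including the stray $-1$; I checked this reindexing and it is right. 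Where you genuinely diverge is in establishing the crossing criterion ``cross iff at least $d+1$ sign changes.'' The paper gets this by quoting the Dey--Pach interleaving lemmas for the moment curve (its Lemmas~\ref{lem:dpach} and~\ref{lem:dpach1}) together with the proper-separation extension lemma (Lemma~\ref{extension}); you instead derive it from scratch via the affine dependence $\sum_i c_i t_i^k=0$, a Stiemke-type theorem of the alternative, and a sign-change bound for polynomials of degree at most $d$ on increasing nodes. Your route is self-contained and dualizes the problem nicely, but it shifts all the difficulty into the weak-Chebyshev sign-counting lemma in the presence of nodal zeros of the certificate $q$ --- exactly the point you flag. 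That bookkeeping is a known but genuinely delicate fact (a node where $q$ vanishes and which sits between two like-signed neighbours must be at least a double root, etc.), and it is the one step of your outline that still needs to be written out in full; the necessity direction, by contrast, follows immediately by testing the dependence against $\prod_k(t-u_k)$ with one $u_k$ per sign change. Finally, your symmetry observation $\mathrm{count}(\beta)=\mathrm{count}(2d+2-\beta)$, giving $c_d^m=\tfrac12\bigl(\binom{2d-1}{d-1}-\mathrm{count}(d+1)\bigr)$, is a genuinely nicer way to obtain the $\Theta(4^d/\sqrt{d})$ asymptotics than anything in the paper, which asserts the asymptotic form without an explicit estimate.
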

\begin{corollary}
\label{momentcurvemaximizes}
The number of crossing pairs of hyperedges when all the $n$ vertices of 
$K_n^d$ are placed on the $d$-dimensional moment curve is 
$\Theta\Bigg(\dfrac{4^d}{\sqrt{d}}\Bigg)\dbinom{n}{2d}$. 
\end{corollary}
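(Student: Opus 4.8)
The plan is to reduce the geometric counting to a purely combinatorial one about sign sequences, and then evaluate that count. Label the $2d$ vertices $p_1,\ldots,p_{2d}$ in increasing order of their moment-curve parameter, so $p_i=(t_i,t_i^2,\ldots,t_i^d)$ with $t_1<\cdots<t_{2d}$. Since any $d+1$ points on the moment curve are affinely independent (a Vandermonde determinant), the vertices are in general position, and a crossing pair of hyperedges must be vertex-disjoint; hence it uses all $2d$ vertices and corresponds to an unordered partition $\{A,B\}$ of the index set into two $d$-element blocks whose simplices meet in their relative interiors. Thus $c_d^m$ is exactly the number of such crossing partitions, and the whole problem becomes: count the partitions $\{A,B\}$ for which $Conv(\{p_i:i\in A\})$ and $Conv(\{p_i:i\in B\})$ cross.

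The key lemma I would establish first characterizes crossing partitions by a sign-change condition. To a partition $(A,B)$ associate the string $\sigma\in\{+,-\}^{2d}$ with $\sigma_i=+$ for $i\in A$ and $\sigma_i=-$ for $i\in B$, and let $\mathrm{sc}(\sigma)$ be its number of sign changes. I claim $(A,B)$ is crossing if and only if $\mathrm{sc}(\sigma)\ge d+1$. The mechanism is the correspondence between hyperplanes in $\mathbb{R}^d$ and real polynomials of degree at most $d$: a hyperplane meets the moment curve exactly at the real roots of such a polynomial, so it induces at most $d$ sign changes along $p_1,\ldots,p_{2d}$. If $\mathrm{sc}(\sigma)\le d$, I can place a separating root in each of the (at most $d$) gaps where $\sigma$ changes sign, producing a polynomial, equivalently a hyperplane, with $A$ strictly on one side and $B$ strictly on the other, so the hulls are separated and do not cross. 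Conversely, disjoint convex hulls are strictly separable, and any separating hyperplane yields a degree-$\le d$ polynomial changing sign at most $d$ times, forcing $\mathrm{sc}(\sigma)\le d$; general position then guarantees that non-separation forces the hulls to meet in their relative interiors. This lemma is where the special geometry of the moment curve enters, and I expect the delicate part to be checking that ``not separable'' upgrades to a genuine relative-interior crossing rather than a boundary touching.

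With the lemma in hand, the count is combinatorial: $c_d^m=\tfrac12\,\#\{\sigma\in\{+,-\}^{2d}:\sigma\text{ has }d\text{ plusses and }\mathrm{sc}(\sigma)\ge d+1\}$, the factor $\tfrac12$ accounting for the fact that $\sigma$ and its negation describe the same unordered pair. I would count these balanced strings by their run structure: a string with $k$ sign changes is a concatenation of $k+1$ alternating runs, and requiring each sign to occur exactly $d$ times turns the count into a product of compositions, giving $N_k=2\binom{d-1}{(k-1)/2}^2$ for odd $k$ and $N_k=2\binom{d-1}{k/2}\binom{d-1}{k/2-1}$ for even $k$. The substitution $k\mapsto 2d-k$ shows $N_k=N_{2d-k}$, so summing over $k\ge d+1$ collapses to $c_d^m=\tfrac14\big(\binom{2d}{d}-N_d\big)$; expanding the middle term $N_d$ according to the parity of $d$ and reorganizing the partial sums through Vandermonde's identity reproduces the two stated closed forms, the extra $-1$ in the odd case coming from the fact that $N_d$ is then the perfect square $2\binom{d-1}{(d-1)/2}^2$.

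Finally, the asymptotics are immediate from the clean form $c_d^m=\tfrac14(\binom{2d}{d}-N_d)$: by Stirling $\binom{2d}{d}=\Theta(4^d/\sqrt d)$, whereas $N_d\le 2\binom{d-1}{\lfloor(d-1)/2\rfloor}^2=O(4^d/d)$ is of strictly smaller order, so the subtraction is negligible and $c_d^m=\Theta(4^d/\sqrt d)$. The two genuine obstacles I anticipate are proving the sign-change lemma rigorously (especially the relative-interior/general-position point noted above) and the bookkeeping needed to pass from the transparent expression $\tfrac14(\binom{2d}{d}-N_d)$ to the exact parity-dependent closed forms in the statement.
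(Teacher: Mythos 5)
Your proposal is correct, and it arrives at the same combinatorial skeleton as the paper --- reduce the crossing count for $2d$ points on the moment curve to counting balanced two-colorings of an ordered $2d$-point sequence with at least $d+1$ alternations, then multiply by $\binom{n}{2d}$ since every crossing pair is vertex-disjoint and the pattern is order-invariant --- but both halves are executed by genuinely different means. For the characterization, the paper cites the Dey--Pach interleaving lemmas (Lemmas \ref{lem:dpach} and \ref{lem:dpach1}) together with its extension lemma (Lemma \ref{extension}), whereas you rederive the criterion from scratch via the duality between hyperplanes and polynomials of degree at most $d$ restricted to the moment curve; this makes the argument self-contained, at the price of having to verify yourself the ``delicate'' step you correctly flag, namely that for vertex-disjoint simplices on points in general position, non-separability upgrades to a crossing in the relative interiors (this follows from the Proper Separation Theorem exactly as in the paper's proof of Lemma \ref{extension}, so the gap is fillable). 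For the enumeration, the paper counts the \emph{complement} by distributing the $d$ blue vertices into the $d$ gaps determined by the red vertices with a bound on the number of occupied gaps, which directly produces the parity-dependent closed forms of Theorem \ref{momntup1}; you instead stratify all $\binom{2d}{d}$ balanced strings by their number $k$ of sign changes, observe the symmetry $N_k=N_{2d-k}$, and collapse the tail sum to the single clean expression $c_d^m=\tfrac14\bigl(\binom{2d}{d}-N_d\bigr)$. Your form checks out numerically against the paper's ($d=2,3,4,5$ give $1,3,13,45$ in both) and has the advantage that the $\Theta(4^d/\sqrt d)$ asymptotics, which is all the corollary needs, drops out immediately since $N_d=O(4^d/d)$ is lower order; the remaining reconciliation with the paper's exact formulas is routine Vandermonde bookkeeping, as you say.
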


\noindent In Section \ref{conjecture3d}, we prove the following Theorem which 
implies that the $3$-dimensional convex crossing number of $K_6^3$ is $3$. It also
shows that the placement of $n$ vertices on the $3$-dimensional moment curve maximizes the number of crossing pairs of hyperedges in a 
$3$-dimensional convex drawing of $K^3_{n}$. For $d > 3$, we do not know if the placement of $n$ vertices on the $d$-dimensional moment curve maximizes the number of crossing pairs of hyperedges in a $d$-dimensional convex drawing of $K^d_{n}$.
\begin{theorem}
\label{convex3d}
The number of crossing pairs of hyperedges of $K_6^3$ is $3$ when all the 
vertices of $K_6^3$ are in convex as well as general position in $\mathbb{R}^3$.
\end{theorem}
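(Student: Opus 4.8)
The plan is to reduce this statement, as elsewhere in the paper, to a counting problem about the Gale transform. Fix six vertices $p_1,\dots,p_6$ in convex and general position in $\mathbb{R}^3$. Since a crossing pair of hyperedges must be vertex disjoint, every crossing pair uses all six vertices and is a complementary pair of triangles $\{A,B\}$ with $|A|=|B|=3$; there are exactly $\frac{1}{2}\binom{6}{3}=10$ such partitions, so it suffices to decide which of these $10$ partitions cross. First I would take the Gale transform of $p_1,\dots,p_6$, obtaining six vectors $g_1,\dots,g_6\in\mathbb{R}^{6-3-1}=\mathbb{R}^2$; general position of the points guarantees that these six vectors are nonzero and pairwise linearly independent, i.e.\ they point in six distinct directions with no two of them parallel.

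The key dictionary I would establish is that the triangles $\mathrm{Conv}(A)$ and $\mathrm{Conv}(B)$ cross (their relative interiors meet) if and only if there is a line through the origin strictly separating $\{g_i:i\in A\}$ from $\{g_j:j\in B\}$. This is the $2d$-point specialization of the Gale correspondence already used in the paper: a point common to the two relative interiors corresponds to an affine dependence $\sum_i c_i p_i=0$, $\sum_i c_i=0$, that is strictly positive on $A$ and strictly negative on $B$, and under Gale duality these are exactly the sign patterns of $c_i=\langle g_i,w\rangle$, hence exactly the strict $(3,3)$ linear separations of the $g_i$ by the line $w^{\perp}$. Ordering the six directions cyclically around the origin, a line through the origin can put exactly three vectors on each side only when those three are \emph{consecutive} in the cyclic order; hence at most three of the ten partitions can cross, namely the three complementary pairs of cyclically consecutive triples. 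This already gives the upper bound of $3$, and the remaining task is to show that all three of these candidate partitions are realizable precisely when the points are in convex position.

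For the lower bound I would translate convex position into a condition on the angular gaps $\delta_1,\dots,\delta_6$ between consecutive Gale directions (so $\sum_i\delta_i=2\pi$). A point $p_i$ is a vertex of the polytope if and only if $0\in\mathrm{relint}\,\mathrm{Conv}\{g_j:j\neq i\}$, i.e.\ the remaining five vectors do not lie in any closed half-plane; removing $p_i$ merges the two gaps adjacent to $g_i$, so this holds for every $i$ exactly when every sum of two cyclically adjacent gaps is strictly less than $\pi$. A direct interval computation then shows that the candidate partition whose separating diameter passes through gaps $i$ and $i+3$ is realizable iff $\delta_{i+1}+\delta_{i+2}<\pi$ and $\delta_{i+4}+\delta_{i+5}<\pi$ (indices mod $6$); both are sums of two adjacent gaps, hence both hold under convex position. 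Thus each of the three candidate partitions does cross, giving exactly $3$.

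The step I expect to be the main obstacle is making the crossing$\leftrightarrow$separation dictionary and the convex-position gap characterization fully rigorous with the correct strictness: I must ensure that general position rules out degenerate Gale diagrams (zero or parallel vectors, or a direction exactly antipodal to another), so that the separations are strict and the ``$<\pi$'' inequalities never become equalities, and I must verify that each realizable separation yields a genuine relative-interior crossing rather than a mere boundary contact. Once these genericity points are pinned down, the count is forced to be exactly $3$ for every convex, general-position placement, which in particular gives $cr^*_3(K^3_6)=3$ and matches $c_3^m=3$ from Theorem~\ref{momntup1}.
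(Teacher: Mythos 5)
Your proposal is correct and follows essentially the same route as the paper: reduce via the Gale transform to counting proper $(3,3)$ linear separations of six pairwise linearly independent vectors in $\mathbb{R}^2$ (Lemmas \ref{genposi} and \ref{bjection}), using the convex-position characterization (Lemma \ref{convexity}) to exclude $(5,1)$ separations. The only difference is in the final count: the paper sweeps a rotating line through the origin and observes that the six side-change events alternate between $(4,2)$ and $(3,3)$ partitions, yielding exactly three proper separations, whereas you obtain the same three by noting that a separable triple must be cyclically consecutive (upper bound) and that the angular-gap inequalities forced by convex position make all three such partitions realizable (lower bound); both executions are valid.
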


\begin{corollary}
${cr^*_3}({K^3_{n}}) = 3 \dbinom{n}{6}$.
\end{corollary}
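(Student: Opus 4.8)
The plan is to run the same Gale-transform reduction used for the lower bound on $c_d$, but in the smallest case. Applying the Gale transform to the six vertices $p_1,\dots,p_6\in\mathbb{R}^3$ produces six vectors $g_1,\dots,g_6$ in $\mathbb{R}^{6-3-1}=\mathbb{R}^2$ with $\sum_i g_i=0$. First I would record the two dictionary facts I need. (i) Because the six points are in general position (no four coplanar), no two of the $g_i$ are parallel and none is zero, so they determine six distinct rays from the origin; write them sorted by angle as $g_{(1)},\dots,g_{(6)}$ and let $\theta_1,\dots,\theta_6$ (with $\sum_i\theta_i=2\pi$) be the consecutive angular gaps. (ii) A vertex-disjoint pair of triangles $\mathrm{conv}(A),\mathrm{conv}(B)$ with $A\sqcup B=\{p_1,\dots,p_6\}$ crosses, i.e. their relative interiors meet, if and only if $(A,B)$ is a \emph{Radon partition} of the six points (a partition whose two convex hulls intersect); and by the defining property of the Gale transform \cite{JM} this holds exactly when $\{g_i:i\in A\}$ and $\{g_j:j\in B\}$ are \emph{strictly} separated by a line through the origin, where strictness (no $g_i$ on the line) is what encodes that the common point lies in both relative interiors.

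Next I would translate the convex-position hypothesis. Each $p_i$ being a vertex of the convex hull is equivalent, under Gale duality, to $0$ lying in the relative interior of $\mathrm{conv}\{g_j:j\neq i\}$; since deleting $g_{(i)}$ merges its two neighbouring gaps, this says precisely that every pair of cyclically adjacent gaps satisfies $\theta_i+\theta_{i+1}<\pi$ (indices mod $6$). The crossing count now becomes purely combinatorial: a strict linear separation by a line through the origin places on one side exactly the vectors lying in an open half-plane, i.e. an angularly consecutive block, so a crossing $3$–$3$ partition must split the six rays into two blocks of three consecutive rays. There are only six such consecutive triples, forming three complementary unordered partitions $P_1,P_2,P_3$; hence there are at most three crossings, and the statement reduces to showing each $P_i$ is realizable.

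For the realizability step I would note that a diameter separating a consecutive triple $\{g_{(i+1)},g_{(i+2)},g_{(i+3)}\}$ from its complement exists exactly when its two endpoints can be placed in the two gaps flanking the triple; writing this out, such a diameter exists iff $\theta_{i+1}+\theta_{i+2}<\pi$ and $\theta_{i+4}+\theta_{i+5}<\pi$, and both are instances of the adjacency condition already established. Therefore all three partitions are realized, every convex-and-general-position placement of six vertices yields exactly three crossing pairs, and $cr^*_3(K^3_6)=3$; applying this to each $6$-subset then gives the corollary $cr^*_3(K^3_n)=3\dbinom{n}{6}$.

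I expect the realizability step to be the main obstacle: the upper bound of three is immediate once the ``consecutive block'' observation is in place, but proving that each of the three candidate partitions is genuinely achieved requires the gap bookkeeping and is exactly where the convex-position hypothesis is used in full (a non-convex placement can violate the gap inequalities and produce fewer crossings). A secondary point needing care is the equivalence between ``relative interiors meet'' and strict rather than weak linear separation, which is what forces general position and rules out boundary-only contacts between the two triangles.
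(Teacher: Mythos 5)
Your proposal is correct and follows essentially the same route as the paper: reduce via the Gale transform to counting $3$--$3$ linear separations of six planar vectors, using the convexity criterion (Lemma \ref{convexity}) to rule out $5$--$1$ splits, and then invoking the bijection of Lemma \ref{bjection}. The only difference is in the final count: the paper rotates a line through the origin and watches the partition alternate $4$--$2$, $3$--$3$, $\dots$ to land on exactly three proper separations, whereas you obtain the same count statically by noting that each separation is a pair of consecutive angular blocks and checking realizability via the adjacent-gap inequalities --- the two bookkeeping methods are equivalent.
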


\section{Gale Transform}
\label{tused}

In this Section, we discuss the Gale Transform and its properties in 
detail. As mentioned earlier, the Gale Transform 
transforms a sequence of  $m \geq d+1$ points $P=\big<p_1, p_2, \ldots, 
p_m\big>$ in 
$\mathbb{R}^d$ (such that the affine hull of the points $p_1, p_2, \ldots, p_m$ 
is 
$\mathbb{R}^d$) to 
a sequence of $m$ vectors 
$D(P)=\big<v_1, v_2, \ldots, v_m\big>$ in $\mathbb{R}^{m-d-1}$. When $m \leq 
2d$, this 
technique 
helps in analyzing the properties of the point sequence $P$  by analyzing the 
properties of $D(P)$ in a lower dimensional space. Let $p_i$ 
denote the $i^{th}$ point in the point sequence $P$ with 
coordinates $(x_1^i, x_2^i, \ldots, x_d^i)$. To obtain the Gale 
Transform of $P$, let us consider the 
following matrix $M(P)$.
\begin{center}
$
M(P) = 
\begin{bmatrix}
x_1^1&x_1^2&\cdots & x_1^m  \\
x_2^1&x_2^2&\cdots & x_2^m \\
\vdots & \vdots & \vdots & \vdots\\ 
x_d^1&x_d^2&\cdots & x_d^m \\
1 & 1 & \cdots & 1
\end{bmatrix}
$
\end{center}

The rank of the matrix $M(P)$ is $d+1$ since there exists a set of $d+1$ points 
in $P$ that are affinely 
independent. This 
implies that the dimension of the null space of the row space of $M(P)$ 
is $m-d-1$.  Let $\{(b_1^1, b_2^1, \ldots, b_m^1), (b_1^2, b_2^2, \ldots, 
b_m^2),$ $\ldots, (b_1^{m-d-1}, b_2^{m-d-1}, \ldots, b_m^{m-d-1}) \}$ be a 
basis of this null space. The Gale Transform of the point sequence 
$P$ is the sequence of $m$ vectors $D(P)$=$\big<(b_1^1$, $b_1^2$, $\ldots, 
b_1^{m-d-1}),$ $(b_2^1,$ $b_2^2,$ $\ldots,$ $b_2^{m-d-1}),$ $\ldots,$ $(b_m^1, 
b_m^2, 
\ldots,$ $b_m^{m-d-1})\big>$. Note that $D(P)$ can also be considered as a 
point sequence in $\mathbb{R}^{m-d-1}$ for a particular choice of the basis. 
Since the 
basis of the null space of the row space of $M(P)$ is not unique, it implies 
that the Gale Transform of $P$ is not unique. However, the following 
properties of the Gale Transform can be easily observed for any choice of 
the basis. For the sake of completeness, we give proofs for these observations.

\begin{lemma}\cite{JM}
 \label{genposi}
Every set of $m-d-1$ vectors in $D(P)$ spans $\mathbb{R}^{m-d-1}$ if the points 
in $P$ are in general position.
\end{lemma}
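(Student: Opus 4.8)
The plan is to prove Lemma~\ref{genposi} by contradiction, translating the general-position hypothesis on the point sequence $P$ into a linear-independence statement about the Gale dual vectors $D(P)$. Recall that $P$ consists of $m$ points in general position in $\mathbb{R}^d$, meaning no $d+1$ of them lie on a common hyperplane; equivalently, every $d+1$ of the columns of $M(P)$ are affinely independent, i.e.\ linearly independent as vectors in $\mathbb{R}^{d+1}$ (the bottom row of $1$'s encodes the affine structure). The Gale transform lives in $\mathbb{R}^{m-d-1}$, so a spanning set there needs exactly $m-d-1$ vectors, and I want to show every such subset is in fact a basis.

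First I would set up the two complementary matrices. Let $M(P)$ be the $(d+1)\times m$ matrix above, whose rows span a $(d+1)$-dimensional subspace $R$ of $\mathbb{R}^m$, and let $G$ be the $(m-d-1)\times m$ matrix whose rows form the chosen basis of the null space $R^{\perp}$; the columns of $G$ are precisely the Gale vectors $v_1,\dots,v_m$. The key orthogonality relation is $M(P)\,G^{T}=0$, so the row space of $M(P)$ and the row space of $G$ are orthogonal complements in $\mathbb{R}^m$, each of full rank ($d+1$ and $m-d-1$ respectively). The main observation I would exploit is the standard duality between the linear dependences among columns of one matrix and the spanning subsets of columns of the other: a subset $S$ of columns of $G$ fails to span $\mathbb{R}^{m-d-1}$ exactly when the complementary columns of $M(P)$ fail to be linearly independent.

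Concretely, suppose for contradiction that some subset of $m-d-1$ Gale vectors, say indexed by $S$ with $|S|=m-d-1$, does not span $\mathbb{R}^{m-d-1}$. Then the $(m-d-1)\times(m-d-1)$ submatrix $G_S$ of $G$ on these columns is singular, so there is a nonzero row vector $w$ with $w\,G_S=0$. Extending to all columns, the vector $w\,G$ is supported only on the complementary index set $\bar{S}$, which has $|\bar S|=d+1$. Since the rows of $G$ span $R^{\perp}$, the vector $u:=w\,G$ is a nonzero element of $R^{\perp}$ supported on $\bar S$. But $u\in R^{\perp}$ means $M(P)\,u^{T}=0$, i.e.\ the $d+1$ columns of $M(P)$ indexed by $\bar S$ satisfy a nontrivial linear relation $\sum_{j\in\bar S} u_j\,(\text{column }j)=0$. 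These columns are exactly the homogenized coordinate vectors of the $d+1$ points $\{p_j : j\in\bar S\}$, and a nontrivial relation among them is precisely an affine dependence, contradicting that these $d+1$ points are affinely independent by the general-position hypothesis. Hence no such $S$ exists and every $m-d-1$ Gale vectors span $\mathbb{R}^{m-d-1}$.

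The step I expect to require the most care is pinning down that $u=w\,G$ is genuinely nonzero and supported on exactly the complementary $d+1$ indices: nonzero because $w\neq0$ and the rows of $G$ are linearly independent (so $G$ has full row rank, making $w\mapsto wG$ injective), and supported on $\bar S$ because $w\,G_S=0$ kills the $S$-coordinates. Equally, I must argue the converse direction of the duality cleanly—that a vector in $R^{\perp}$ supported on a $(d+1)$-set corresponds to an affine dependence among those $d+1$ points—which is just the definition of $R^{\perp}=\ker M(P)$ read column-wise. Everything else is bookkeeping on matrix ranks, so the conceptual weight sits in this single orthogonal-complement duality, which I would state explicitly and then apply once.
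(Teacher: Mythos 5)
Your proof is correct, but it is the dual of the argument the paper gives rather than a verbatim match. The paper starts from the equivalent hypothesis that the $m-d-1$ chosen Gale vectors are linearly \emph{dependent}, takes the vector $\lambda$ of dependence coefficients padded with zeros on the complementary index set $\bar S$, observes that $\sum_i \lambda_i v_i = 0$ places $\lambda$ in the row space of $M(P)$, and reads off from the resulting coefficients $\alpha_1,\dots,\alpha_{d+1}$ a hyperplane containing the $d+1$ points indexed by $\bar S$. You instead take a left null vector $w$ of the square submatrix $G_S$ and push it forward to $u = wG$, a nonzero element of the kernel of $M(P)$ supported on $\bar S$, which you read as an affine dependence among those same $d+1$ points. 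The two certificates live in complementary subspaces (a vector in the row space of $M(P)$ supported on $S$, versus a vector in $\ker M(P)$ supported on $\bar S$), but they contradict general position in equivalent ways, since $d+1$ points of $\mathbb{R}^d$ lie on a common hyperplane if and only if they are affinely dependent. Neither route is harder than the other: yours never needs to introduce the hyperplane coefficients explicitly, while the paper's avoids the full-row-rank/injectivity check on $G$ that your argument correctly flags as the step needing care.
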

\begin{proof}
Without loss of generality, 
let us assume that the first $m-d-1$ vectors in $D(P)$, i.e., $(b_1^1,$ 
$b_1^2,$ 
$\ldots, $
$b_1^{m-d-1}),$ $(b_2^1,$ $b_2^2,$ $\ldots,$ $b_2^{m-d-1}),$ $\ldots,$ 
$(b_{m-d-1}^1,$ 
$b_{m-d-1}^2,$ 
$\ldots,$ $b_{m-d-1}^{m-d-1})$ do not span $\mathbb{R}^{m-d-1}$. In other 
words, 
we 
assume that these vectors are 
linearly dependent. This implies that there exist real numbers 
$\lambda_1, \lambda_2, \ldots, \lambda_{m-d-1}$, not all of them 
zero, such that $\lambda_1(b_1^1, b_1^2, \ldots, 
b_1^{m-d-1}) + 
\lambda_2(b_2^1, b_2^2, \ldots, b_2^{m-d-1}) +$ $\ldots + 
\lambda_{m-d-1}(b_{m-d-1}^1, b_{m-d-1}^2, \ldots, b_{m-d-1}^{m-d-1}) = 
\vec{0}$. Let 
us consider the vector $(\lambda_1,$ $\lambda_2,$ $\ldots,$ $\lambda_{m-d-1}, 
\lambda_{m-d}=0,$ $\ldots,$ $\lambda_m = 0)$. It is easy to see that 
$\lambda_1(b_1^1,$ $b_1^2,$ $\ldots,$ 
$b_1^{m-d-1})$ $+ \lambda_2(b_2^1,$ $b_2^2,$ $\ldots,$ $b_2^{m-d-1}) +$ $\ldots 
+ \lambda_m(b_m^1,$ $b_m^2,$ $\ldots,$ $b_m^{m-d-1})$ $= \vec{0}$ . This 
implies that the 
vector $(\lambda_1, \lambda_2, \ldots, \lambda_m)$ lies in the row space of 
$M(P)$. This further implies that there exist real numbers $\alpha_1, 
\alpha_2, \ldots, \alpha_{d+1}$, not all of them zero, such that the following 
set 
of linear equations holds for each $i$ satisfying $1 \leq i \leq m$.
$$
\alpha_1x_1^i + \alpha_2x_2^i + \ldots+ \alpha_dx_d^i + \alpha_{d+1} =\lambda_i 
$$
This implies the last $d+1$ points in $P$, i.e., $\{p_{m-d}, p_{m-d+1}, 
\ldots, p_{m}\}$ lie on the hyperplane $ 
\alpha_1x_1 + \alpha_2x_2 + \ldots+ \alpha_dx_d + \alpha_{d+1} = 0$. This is 
a contradiction to the assumption that points in $P$ are in general position in 
$\mathbb{R}^d$. This completes the proof.
\end{proof}

\begin{lemma}\cite{JM}
\label{bjection}
Consider 2 integers $u$ and $v$ satisfying $1\leq u\leq d-1$, $1\leq v\leq d-1$ 
and  
$u+v+2 = m$. If the points of $P$ are in general 
position in $\mathbb{R}^d$, there exists a 
bijection between the crossing pairs of $u$ and $v$-simplices
in $P$ and linear separations of $D(P)$ into $D(P_1)$ and $D(P_2)$ 
such that $|D(P_1)| = u+1$ and $|D(P_2)| = v+1$.  
\end{lemma}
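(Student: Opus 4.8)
The plan is to realize both sides of the claimed bijection as the same combinatorial object — a partition of the index set $\{1,\ldots,m\}$ into a block $I_1$ of size $u+1$ and a block $I_2$ of size $v+1$ — and then to show that such a partition yields a crossing pair of simplices in $P$ if and only if the corresponding partition of $D(P)$ is linearly separable by a hyperplane through the origin. Since $u+v+2=m$, the two simplices $Conv(\{p_i : i \in I_1\})$ and $Conv(\{p_i : i \in I_2\})$ are automatically vertex-disjoint and use all of $P$; moreover, since $|I_1|=u+1\le d$ and $|I_2|=v+1\le d$, general position guarantees that each block is affinely independent, so these are honest $u$- and $v$-simplices. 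Thus the bijection is the identity map on index partitions, and the entire content reduces to a set equality between crossing partitions and linearly separable partitions.

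First I would record the linear-algebraic dictionary supplied by the Gale transform. Writing $B$ for the $(m-d-1)\times m$ matrix whose rows are the chosen basis of the null space of $M(P)$, the vector $v_i \in D(P)$ is exactly the $i$-th column of $B$. Consequently, for any normal vector $c \in \mathbb{R}^{m-d-1}$ the tuple of inner products $(\langle c, v_1\rangle,\ldots,\langle c, v_m\rangle)$ equals $c^{T}B$, and as $c$ ranges over $\mathbb{R}^{m-d-1}$ this sweeps out the entire row space of $B$, which is precisely the null space of $M(P)$. Finally, $\lambda=(\lambda_1,\ldots,\lambda_m)$ lies in the null space of $M(P)$ if and only if $\sum_i \lambda_i p_i=\vec{0}$ and $\sum_i \lambda_i=0$, i.e. $\lambda$ is an affine dependence of $P$. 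Hence linear functionals on the Gale vectors correspond to affine dependences of $P$ via $\lambda_i=\langle c, v_i\rangle$.

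Now I would establish the two directions. For the forward direction, suppose the simplices cross, so some point $x$ lies in the relative interiors of both; then $x=\sum_{i\in I_1}\alpha_i p_i=\sum_{i\in I_2}\beta_i p_i$ with all $\alpha_i,\beta_i>0$ and $\sum_{I_1}\alpha_i=\sum_{I_2}\beta_i=1$. Setting $\lambda_i=\alpha_i$ on $I_1$ and $\lambda_i=-\beta_i$ on $I_2$ produces an affine dependence whose coordinates are strictly positive on $I_1$ and strictly negative on $I_2$. By the dictionary there is a $c$ with $\langle c, v_i\rangle=\lambda_i$, so the hyperplane with normal $c$ strictly separates $\{v_i:i\in I_1\}$ from $\{v_i:i\in I_2\}$. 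For the converse, a linear separation gives a $c$ with $\langle c, v_i\rangle>0$ on $I_1$ and $\langle c, v_i\rangle<0$ on $I_2$; the induced $\lambda_i=\langle c, v_i\rangle$ is then an affine dependence with these strict signs, so rearranging $\sum_i \lambda_i p_i=\vec{0}$ and normalizing by the common positive mass $\sum_{I_1}\lambda_i=-\sum_{I_2}\lambda_i$ exhibits a common point of the two simplices that, having all strictly positive coefficients, lies in both relative interiors. This is exactly a crossing.

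I expect the main obstacle to be bookkeeping the strictness conditions rather than any deep step: the crossing condition is a relative-interior (hence strict-positivity) statement, and it must be matched exactly with a strict linear separation in which no Gale vector lies on the separating hyperplane. General position of the original points — equivalently, the spanning property of every $m-d-1$ Gale vectors from Lemma~\ref{genposi} — is what rules out degenerate affine dependences with vanishing coefficients, lets me keep all inequalities strict in both directions, and guarantees that each block spans a genuine simplex. Once these strictness issues are handled, the forward and backward constructions are visibly inverse on index partitions, which yields the bijection.
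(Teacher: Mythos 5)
Your proposal is correct and follows essentially the same route as the paper: both directions translate a crossing (a common relative-interior point, i.e.\ an affine dependence with strictly positive coefficients on one block and strictly negative on the other) into a vector of the null space of $M(P)$, and identify such vectors with linear functionals evaluated on the Gale vectors, which is exactly a strict linear separation. Your explicit attention to strictness of the barycentric coordinates and to the affine independence of each block is slightly more careful than the paper's write-up (which writes $\lambda_k \geq 0$ before using strict inequalities), but the argument is the same.
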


\begin{proof}
$(\Rightarrow)$ Let $\sigma$ be a $u$-simplex that crosses a 
$v$-simplex $\nu$, such that $1\leq u\leq d-1$, $1\leq v\leq d-1$ and $u+v+2 
=m$. Without loss of generality, we assume that $\sigma$ is spanned by the 
first $u+1$ points $\{p_1, p_2, \ldots p_{u+1}\}$ of $P$ and $\nu$ is spanned 
by the remaining $v+1$ points $\{p_{u+2}, p_{u+3}, \ldots, p_m\}$ of $P$. As 
there exists a crossing between $\sigma$ and $\nu$, we know that there exists a 
point $p$ 
belonging to the relative interiors of both $\sigma$ and $\nu$. This implies 
that there exist real numbers $\lambda_k \geq 0$, $1 \leq k \leq m$, 
satisfying the following equations:
\begin{equation*}
 p = \sum\limits_{i \in \{1, 2, \ldots, u+1 \}} \lambda_ip_i  = 
\sum\limits_{j 
\in \{u+2, u+3, \ldots, m \}} \lambda_jp_j 
\end{equation*}
\begin{equation*}
\sum\limits_{i \in \{1, 2, \ldots, u+1 \}} \lambda_i  = \sum\limits_{j \in 
\{u+2, u+3, \ldots, m \}} \lambda_j = 1
\end{equation*}

Therefore, we obtain the following equation.

\begin{center}
\begin{equation}\label{eqn:glprop}
 \begin{bmatrix}
x_1^1&x_1^2&\cdots & x_1^m  \\
x_2^1&x_2^2&\cdots & x_2^m \\
\vdots & \vdots & \vdots & \vdots\\ 
x_d^1&x_d^2&\cdots & x_d^m \\
1 & 1 & \cdots & 1
 \end{bmatrix}
     \begin{bmatrix}
      \lambda_1 \\
        \vdots\\
        \lambda_{u+1}\\
        -\lambda_{u+2}\\
        \vdots\\
       -\lambda_m\\
  \end{bmatrix}
  =
  \begin{bmatrix}
       0 \\
      0 \\
      \vdots\\
      0
  \end{bmatrix}
\end{equation}
\end{center}

It is evident from Equation 
\ref{eqn:glprop} that the vector $(\lambda_1,$ $
\lambda_2,$ $\ldots,$ $\lambda_{u+1},$ $-\lambda_{u+2},$ $\ldots,$ 
$-\lambda_m)$ lies in 
the null space of the row space of $M(P)$. This implies that 
$(\lambda_1,$ $\lambda_2,$ $\ldots,$ 
$\lambda_{u+1},$ $ -\lambda_{u+2},$ $\ldots, -\lambda_m)$ $= \alpha_1(b_1^1, 
b_2^1,$ $ \ldots, b_m^1) + \alpha_2(b_1^2,$ $b_2^2,$ $ \ldots, b_m^2)$ $+ 
\ldots 
+ \alpha_{m-d-1}$ $(b_1^{m-d-1}$, 
$b_2^{m-d-1},$ $\ldots,$ $b_m^{m-d-1})$, for some real numbers $\alpha_1, 
\alpha_2, \ldots, 
\alpha_{m-d-1}$, not all of them zero. In other words, there exist $\alpha_1,$ 
$\alpha_2,$ $\ldots, \alpha_{m-d-1}$, not all of them zero, such that 
$\alpha_1b_i^1 + \alpha_2b_i^2 +$ $\ldots +$ 
$\alpha_{m-d-1}$ $b_i^{m-d-1}  > 0$ for $i= 1, 2, \ldots, u+1$, and 
$\alpha_1b_j^1 + 
\alpha_2b_j^2 + \ldots + \alpha_{m-d-1}b_j^{m-d-1} < 0$ for $j= u+2, u+3, 
\ldots, m$. This shows that the hyperplane $\sum\limits_{i \in \{1, 2, 
\ldots,m-d-1\}}\alpha_ix_i = 0$ separates the first $u+1$ vectors in $D(P)$ 
from 
the remaining $v+1$ vectors.\\

($\Leftarrow$) 
Without loss of generality, let us assume that the hyperplane 
$$\sum\limits_{i \in \{1, \ldots,m-d-1\}} \alpha'_ix_i = 0$$ separates the 
first 
$u+1$ vectors in $D(P)$ from 
the 
remaining $v+1$ vectors. This implies that there exists a vector $(\mu'_1, 
\mu'_2, \ldots, \mu'_m)= \alpha'_1(b_1^1, b_2^1, \ldots, b_m^1) + 
\alpha'_2(b_1^2,$ $b_2^2,$ $\ldots,$ $b_m^2)$ $+ \ldots +$ $
\alpha'_{m-d-1}(b_1^{m-d-1},$ 
$b_2^{m-d-1},$ $\ldots,$ $b_m^{m-d-1})$ such that the signs of $\mu'_i$ for $1 
\leq i \leq u+1$ are opposite to the signs of $\mu'_j$ for $u+2 \leq j 
\leq m$. Without loss of generality, let us assume that $\mu'_i > 0$ for $1 
\leq i \leq u+1$ and $\mu'_j < 0$ for $u+2 \leq j 
\leq m$.  As this vector $(\mu'_1, 
\mu'_2, \ldots, \mu'_m)$ lies in the null space of the row space of 
$M(P)$, it satisfies the following equation.
\vspace{-0.5cm}
\begin{center}
\begin{equation}\label{eqn:glprop1}
 \begin{bmatrix}
x_1^1&x_1^2&\cdots & x_1^m  \\
x_2^1&x_2^2&\cdots & x_2^m \\
\vdots & \vdots & \vdots & \vdots\\ 
x_d^1&x_d^2&\cdots & x_d^m \\
1 & 1 & \cdots & 1
 \end{bmatrix}
     \begin{bmatrix}
      \mu'_1 \\
       \mu'_2 \\
        \vdots\\
       \mu'_m\\
  \end{bmatrix}
  =
  \begin{bmatrix}
       0 \\
      0 \\
      \vdots\\
      0
  \end{bmatrix}
\end{equation}
\end{center}
From Equation \ref{eqn:glprop1}, we obtain the following.
\begin{equation*}
\sum\limits_{i \in \{1, 2, \ldots, u+1 \}} \mu'_ip_i  = 
\sum\limits_{j 
\in \{u+2, u+3, \ldots, m \}} -\mu'_jp_j 
\end{equation*}
\begin{equation*}
\sum\limits_{i \in \{1, 2, \ldots, u+1 \}} \mu'_i  = \sum\limits_{j \in 
\{u+2, u+3, \ldots, m \}} -\mu'_j 
\end{equation*}
Rearranging the above equations, we obtain the following.
\begin{equation*}
\sum\limits_{i \in \{1, 2, \ldots, u+1 \}} {\dfrac{\mu'_i}{\sum\limits_{k \in 
\{1, 2, \ldots, u+1 \}} \mu'_k}}p_i  = 
\sum\limits_{j 
\in \{u+2, u+3, \ldots, m \}} {\dfrac{\mu'_j}{\sum\limits_{k \in \{u+2, u+3, 
\ldots, m \}} \mu'_k}}p_j 
\end{equation*}

\begin{equation*}
 \sum\limits_{i \in \{1, 2, \ldots, u+1 \}} {\dfrac{\mu'_i}{\sum\limits_{k \in 
\{1, 2, \ldots, u+1 \}} \mu'_k}}  = \sum\limits_{j \in 
\{u+2, u+3, \ldots, m \}} {\dfrac{\mu'_j}{\sum\limits_{k \in \{u+2, u+3, 
\ldots, 
m \}} \mu'_k}} = 1
\end{equation*}

It shows that there exists a crossing between the $u$-simplex spanned by 
the first $u+1$ points of $P$ and the $v$-simplex spanned by the remaining 
$v+1$ points of $P$.            
\end{proof}

\noindent An argument similar to the one used above can be used to prove the following Lemma.

\begin{lemma}\cite{JM}
\label{convexity}
The points in $P$ are in convex position in $\mathbb{R}^d$ if 
and only if there 
is 
no linear hyperplane $h$ with exactly one vector from $D(P)$ on one side of $h$.
\end{lemma}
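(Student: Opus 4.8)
The plan is to mirror the argument of Lemma~\ref{bjection}, exploiting the same dictionary between the null space of the row space of $M(P)$ and linear functionals on the Gale space $\mathbb{R}^{m-d-1}$. The observation I would isolate first is that a sequence of reals $(\lambda_1, \ldots, \lambda_m)$ is an \emph{affine dependence} of $P$ --- meaning $\sum_i \lambda_i p_i = \vec{0}$ and $\sum_i \lambda_i = 0$ --- exactly when it lies in the null space of the row space of $M(P)$, and that every such vector can be written as $\lambda_i = \sum_k \alpha_k b_i^k = \langle c, v_i\rangle$ with $c = (\alpha_1, \ldots, \alpha_{m-d-1})$, because the rows $(b_1^k, \ldots, b_m^k)$ form a basis of that null space while $v_i = (b_i^1, \ldots, b_i^{m-d-1})$. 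In other words, affine dependences of $P$ correspond to linear functionals $c$ on $\mathbb{R}^{m-d-1}$, and the sign of $\lambda_i$ records on which open side of the hyperplane $h = \{x : \langle c, x\rangle = 0\}$ the Gale vector $v_i$ lies. This is the exact analogue of the computation carried out with Equations~\ref{eqn:glprop} and~\ref{eqn:glprop1}.

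I would then prove the two directions of the contrapositive. For the forward direction, suppose $P$ is not in convex position, so some point, say $p_1$, lies in $Conv(P\setminus\{p_1\})$; write $p_1 = \sum_{i\geq 2}\beta_i p_i$ with $\beta_i \geq 0$ and $\sum_{i\geq 2}\beta_i = 1$. Setting $\lambda_1 = -1$ and $\lambda_i = \beta_i$ for $i \geq 2$ gives a nonzero affine dependence, hence a nonzero $c$ with $\langle c, v_1\rangle = -1 < 0$ and $\langle c, v_i\rangle = \beta_i \geq 0$ for $i \geq 2$; the hyperplane $\{x : \langle c, x\rangle = 0\}$ then has exactly $v_1$ strictly on its negative side. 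Conversely, if a hyperplane $h = \{x : \langle c, x\rangle = 0\}$ has exactly one vector strictly on one side, I may assume after negating $c$ that $\langle c, v_1\rangle < 0$ and $\langle c, v_i\rangle \geq 0$ for $i \geq 2$; setting $\lambda_i = \langle c, v_i\rangle$ yields an affine dependence with $\lambda_1 < 0$ and $\lambda_i \geq 0$ otherwise, and normalizing by $-\lambda_1 = \sum_{i\geq 2}\lambda_i > 0$ exhibits $p_1$ as a convex combination of the remaining points, so $P$ is not in convex position.

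The step needing the most care --- and the one I expect to be the main obstacle --- is the treatment of the vectors lying exactly on $h$, i.e. the indices with $\lambda_i = 0$. I would stress that ``exactly one vector on one side'' must be read as exactly one vector in the corresponding \emph{open} halfspace, so vectors on $h$ are permitted and merely correspond to points receiving weight zero in the convex combination; this is harmless because the normalization only needs $\sum_{i\geq 2}\lambda_i = -\lambda_1 > 0$, which the single strictly negative coordinate guarantees. I would also record the two small sanity checks: that the constructed functional $c$ is genuinely nonzero (immediate from $\lambda_1 \neq 0$, since the coordinates of the $v_i$ arise from a basis of the null space), so that $h$ is an honest hyperplane, and that the equivalence needs only the affine hull of $P$ to be $\mathbb{R}^d$, not any stronger general-position hypothesis.
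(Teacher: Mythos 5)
Your proof is correct and is precisely the argument the paper has in mind: the paper gives no explicit proof of Lemma~\ref{convexity}, remarking only that ``an argument similar to the one used above'' (i.e.\ the affine-dependence/linear-functional dictionary from the proof of Lemma~\ref{bjection}) suffices, and your write-up supplies exactly that argument. Your care about vectors lying on $h$ (reading ``one side'' as the open halfspace) and about $c\neq 0$ is appropriate and does not change the route.
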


In the following, we derive the Gale Transform of a sequence of $m \geq 
d+1$ points placed on the $d$-dimensional moment curve. Let this point sequence 
be $A=\big<(t_1, (t_1)^2,$ $\ldots, (t_1)^d),$ $(t_2, (t_2)^2, \ldots, 
(t_2)^d),$ $\ldots,$ $
(t_m, (t_m)^2,$ $ \ldots, (t_m)^d)\big>$, where each $t_i$ for $1 \leq i 
\leq m$ is a real number satisfying $t_i < t_j$ for $i < j$.
We obtain the following two Lemmas for $m = d+3$ and $m = 2d$, respectively.
\begin{lemma}
\label{lem:d3g}
The following sequence of $2$-dimensional vectors $D(A)=$ $\big<v_1,$ $v_2,$ 
$\ldots,$ $v_{d+3}\big>$ can be obtained by the Gale Transform of $A=$ 
$\big<(t_1, (t_1)^2,$ $\ldots, (t_1)^d),$ 
$(t_2,$ $(t_2)^2,$ $\ldots,$ 
$(t_2)^d),$ $\ldots,$ $
(t_{d+3},$ $(t_{d+3})^2,$ $ \ldots,$ $(t_{d+3})^d)\big>$.\\\\
\scalebox{.8}{
$v_i = 
\begin{cases}
\Bigg(
(-1)^{d+1}{\dfrac{\prod\limits_{j\in{\{1,2,\cdots,d+1\}\setminus 
\{i\}}} 
(t_{d+2}-t_j)}{\prod\limits_{k\in{\{1,2,\cdots,d+1\}\setminus \{i\}}} 
(t_k-t_i)}}, (-1)^{d+1}{\dfrac{\prod\limits_{j\in{\{1,2,\cdots,d+1\}\setminus 
\{i\}}} 
(t_{d+3}-t_j)}{\prod\limits_{k\in{\{1,2,\cdots,d+1\}\setminus \{i\}}} 
(t_k-t_i)}}\Bigg) &  if~~ i \in \{ 1,2,\cdots,d+1\} \\
(1, 0) &  if~~ i= d+2\\
(0, 1) &  if~~ i= d+3\\
\end{cases} 
$
}
\end{lemma}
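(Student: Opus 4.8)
The plan is to verify directly that the two coordinate sequences of the proposed $D(A)$ lie in the null space of the row space of $M(A)$ and are linearly independent; since that null space has dimension $m-d-1 = 2$ (the rank of $M(A)$ being $d+1$), this certifies that the listed vectors form a legitimate Gale Transform. Write $w^1 = (w^1_1, \ldots, w^1_{d+3})$ and $w^2 = (w^2_1, \ldots, w^2_{d+3})$ for the sequences of first and second coordinates of $v_1, \ldots, v_{d+3}$. A vector $b = (b_1, \ldots, b_{d+3})$ is orthogonal to every row of $M(A)$ exactly when $\sum_{i=1}^{d+3} t_i^{\,s} b_i = 0$ for each $s \in \{0, 1, \ldots, d\}$ (the rows being the power sequences $t_i^{\,s}$ together with the all-ones row for $s=0$); equivalently, $\sum_{i=1}^{d+3} q(t_i)\, b_i = 0$ for every polynomial $q$ of degree at most $d$.

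The key observation is that, after the sign bookkeeping, $w^1$ and $w^2$ are built out of the Lagrange basis polynomials for the first $d+1$ nodes. First I would rewrite the denominator $\prod_{k \in \{1,\ldots,d+1\}\setminus\{i\}}(t_k - t_i)$, which has $d$ factors, as $(-1)^d \prod_{k \neq i}(t_i - t_k)$; combined with the prefactor $(-1)^{d+1}$ this yields an overall sign $(-1)^{2d+1} = -1$. Hence, letting $L_i(x) = \prod_{k \in \{1,\ldots,d+1\}\setminus\{i\}} \frac{x-t_k}{t_i - t_k}$ denote the $i$th Lagrange basis polynomial on the nodes $t_1, \ldots, t_{d+1}$, the claimed formula is exactly $w^1_i = -L_i(t_{d+2})$ and $w^2_i = -L_i(t_{d+3})$ for $1 \le i \le d+1$, together with $w^1_{d+2}=1$, $w^1_{d+3}=0$, $w^2_{d+2}=0$, $w^2_{d+3}=1$.

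Next I would invoke Lagrange interpolation. For any polynomial $q$ with $\deg q \le d$, the interpolant $\sum_{i=1}^{d+1} q(t_i) L_i(x)$ is the unique polynomial of degree at most $d$ agreeing with $q$ at the $d+1$ distinct nodes $t_1, \ldots, t_{d+1}$, and therefore equals $q$ identically. Evaluating at $x=t_{d+2}$ gives $\sum_{i=1}^{d+1} q(t_i) L_i(t_{d+2}) = q(t_{d+2})$, so
\[
\sum_{i=1}^{d+3} q(t_i)\, w^1_i = -\sum_{i=1}^{d+1} q(t_i) L_i(t_{d+2}) + q(t_{d+2}) = 0,
\]
and evaluating at $x = t_{d+3}$ yields the analogous vanishing for $w^2$. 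Taking $q(x) = x^s$ for $s = 0, \ldots, d$ shows that both $w^1$ and $w^2$ annihilate the row space of $M(A)$. Linear independence is then immediate from the last two coordinates, since the $2\times 2$ block formed by the entries in positions $d+2$ and $d+3$ of $w^1$ and $w^2$ is the identity matrix. Thus $\{w^1, w^2\}$ is a basis of the null space and $v_i = (w^1_i, w^2_i)$ is a valid Gale Transform of $A$.

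The routine-but-error-prone part is the sign computation of the second paragraph, while the one genuine idea is recognizing that the coordinates are values of Lagrange basis polynomials, so that the orthogonality conditions collapse to the interpolation identity $\sum_{i=1}^{d+1} q(t_i) L_i(t_{d+2}) = q(t_{d+2})$. I expect no serious obstacle beyond keeping the index ranges and signs straight.
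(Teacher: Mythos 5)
Your proof is correct. It differs from the paper's in direction rather than in substance: the paper \emph{derives} the formula by rearranging $M(A)\gamma = 0$ so that $(\gamma_1,\ldots,\gamma_{d+1})$ is expressed as the inverse of the $(d+1)\times(d+1)$ Vandermonde-type block applied to the columns indexed by $d+2$ and $d+3$, then sets $(\gamma_{d+2},\gamma_{d+3})$ equal to $(1,0)$ and $(0,1)$ and reads off the entries; you instead \emph{verify} the stated vectors by observing that, after the sign bookkeeping, $w^1_i = -L_i(t_{d+2})$ and $w^2_i = -L_i(t_{d+3})$ in terms of the Lagrange basis polynomials on the nodes $t_1,\ldots,t_{d+1}$, so that orthogonality to every row of $M(A)$ collapses to the interpolation identity $\sum_{i=1}^{d+1} q(t_i) L_i(x) = q(x)$ for $\deg q \le d$. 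These are two views of the same underlying fact (the entries of the inverse Vandermonde action are exactly Lagrange coefficients), but your version has the advantage of making explicit the computation that the paper compresses into ``we obtain the following,'' and it packages the linear-independence and dimension count ($m-d-1=2$, identity block in positions $d+2, d+3$) cleanly. Your sign check is right: the denominator $\prod_{k\ne i}(t_k-t_i)$ has $d$ factors, contributing $(-1)^d$ relative to $\prod_{k\ne i}(t_i-t_k)$, and combined with $(-1)^{d+1}$ this gives the overall factor $-1$.
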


\begin{proof}
Let us consider the following matrix M(A).
 \begin{center}
$
M(A) = 
\begin{bmatrix}
t_1&t_2&\cdots & t_{d+3}  \\
(t_1)^2&(t_2)^2&\cdots & (t_{d+3})^2 \\
\vdots & \vdots & \vdots & \vdots\\ 
(t_1)^d&(t_2)^d&\cdots & (t_{d+3})^d \\
1 & 1 & \cdots & 1
\end{bmatrix}
$
\end{center}
To obtain the basis of the null space, we need to find solutions of the 
following $d+1$ equations involving $d+3$ variables $\gamma_1,$ $\gamma_2, 
\ldots,$ $\gamma_{d+3}$.
\vspace{-0.5cm}

\begin{center}\begin{equation}\label{eqn:mmntd3}
 \begin{bmatrix}
t_1&t_2&\cdots & t_{d+3}  \\
(t_1)^2&(t_2)^2&\cdots & (t_{d+3})^2 \\
\vdots & \vdots & \vdots & \vdots\\ 
(t_1)^d&(t_2)^d&\cdots & (t_{d+3})^d \\
1 & 1 & \cdots & 1
 \end{bmatrix}
     \begin{bmatrix}
      \gamma_1 \\
       \gamma_2 \\
        \vdots\\
       \gamma_{d+3}\\
  \end{bmatrix}
  =
  \begin{bmatrix}
       0 \\
      0 \\
      \vdots\\
      0
  \end{bmatrix}
\end{equation}
\end{center}
Rearranging Equation \ref{eqn:mmntd3}, we get the following:
\begin{center}$
 \begin{bmatrix}
      \gamma_1 \\
       \gamma_2 \\
        \vdots\\
       \gamma_{d+1}\\
  \end{bmatrix} =
  -
  \begin{bmatrix}
  t_1&t_2&\cdots & t_{d+1}  \\
(t_1)^2&(t_2)^2&\cdots & (t_{d+1})^2 \\
\vdots & \vdots & \vdots & \vdots\\ 
(t_1)^d&(t_2)^d&\cdots & (t_{d+1})^d \\
1 & 1 & \cdots & 1
\end{bmatrix}^{-1}
\begin{bmatrix}
 t_{d+2}&t_{d+3}  \\
(t_{d+2})^2&(t_{d+3})^2 \\
\vdots & \vdots \\ 
(t_{d+2})^d&(t_{d+3})^d& \\
1 & 1 
\end{bmatrix}
\begin{bmatrix}
 \gamma_{d+2} \\
  \gamma_{d+3} \\
\end{bmatrix}$
\end{center}
 
Setting $\gamma_{d+2} = 1$ and $\gamma_{d+3} = 0$, we obtain the following for 
every $i$ satisfying $1\leq i\leq d+1$.
\begin{center}
$\gamma_i =
(-1)^{d+1}{\dfrac{\prod\limits_{j\in{\{1,2,\cdots,d+1\}\setminus 
\{i\}}} 
(t_{d+2}-t_j)}{\prod\limits_{k\in{\{1,2,\cdots,d+1\}\setminus \{i\}}} 
(t_k-t_i)}}$.\\
\end{center}
Setting $\gamma_{d+2} = 0$ and $\gamma_{d+3} = 1$, we obtain the 
following for every $i$ satisfying $1\leq i\leq d+1$.
\begin{center}
$\gamma_i =
(-1)^{d+1}{\dfrac{\prod\limits_{j\in{\{1,2,\cdots,d+1\}\setminus 
\{i\}}} 
(t_{d+3}-t_j)}{\prod\limits_{k\in{\{1,2,\cdots,d+1\}\setminus \{i\}}} 
(t_k-t_i)}}$. 
\end{center}
Note that the vectors 
\scalebox{.9}{$
\Bigg(
(-1)^{d+1}{\dfrac{\prod\limits_{j\in{\{1,2,\cdots,d+1\}\setminus 
\{1\}}} 
(t_{d+2}-t_j)}{\prod\limits_{k\in{\{1,2,\cdots,d+1\}\setminus \{1\}}} 
(t_k-t_{1})}},$
$\ldots,$
$(-1)^{d+1}{\dfrac{\prod\limits_{j\in{\{1,2,\cdots,d+1\}\setminus 
\{d+1\}}} 
(t_{d+2}-t_j)}{\prod\limits_{k\in{\{1,2,\cdots,d+1\}\setminus \{d+1\}}} 
(t_k-t_{d+1})}},$
$1,$
$0\Bigg)$} and 
\scalebox{.8}{
$\Bigg(
(-1)^{d+1}{\dfrac{\prod\limits_{j\in{\{1,2,\cdots,d+1\}\setminus 
\{1\}}}
(t_{d+3}-t_j)}{\prod\limits_{k\in{\{1,2,\cdots,d+1\}\setminus \{1\}}} 
(t_k-t_1)}},$
$\ldots,$
$(-1)^{d+1}{\dfrac{\prod\limits_{j\in{\{1,2,\cdots,d+1\}\setminus 
\{d+1\}}} 
(t_{d+3}-t_j)}{\prod\limits_{k\in{\{1,2,\cdots,d+1\}\setminus \{d+1\}}} 
(t_k-t_{d+1})}},$
$0,$
$1\Bigg)$
} are linearly independent and form a basis of the 
null space of the row space of M(A). Hence, the result follows.
\end{proof}

\begin{lemma}
\label{lem:2dmg}
The following sequence of $(d-1)$-dimensional vectors $D(A)=$ $\big<v_1, v_2, 
$\ldots,$ v_{2d} \big>$ can be obtained by the Gale Transform of $A =$ 
$\big<(t_1, (t_1)^2,$ $\ldots,$ $(t_1)^d),$ $(t_2, 
(t_2)^2, \ldots, 
(t_2)^d),$ $\ldots,$ $
(t_{2d}, (t_{2d})^2,$ $ \ldots, (t_{2d})^d)\big>$.\\

\scalebox{.7}{$v_{i} = 
\begin{cases}
\Bigg(
(-1)^{d+1}{\dfrac{\prod\limits_{j\in{\{1,2,\cdots,d+1\}\setminus 
\{i\}}} 
(t_{d+2}-t_j)}{\prod\limits_{k\in{\{1,2,\cdots,d+1\}\setminus \{i\}}} 
(t_k-t_i)}}, (-1)^{d+1}{\dfrac{\prod\limits_{j\in{\{1,2,\cdots,d+1\}\setminus 
\{i\}}} 
(t_{d+3}-t_j)}{\prod\limits_{k\in{\{1,2,\cdots,d+1\}\setminus \{i\}}} 
(t_k-t_i)}},
\ldots,
(-1)^{d+1}{\dfrac{\prod\limits_{j\in{\{1,2,\cdots,d+1\}\setminus 
\{i\}}} 
(t_{2d}-t_j)}{\prod\limits_{k\in{\{1,2,\cdots,d+1\}\setminus \{i\}}} 
(t_k-t_i)}}\Bigg)\\  
~~~~~~~~~~~~~~~~~~~~~~~~~~~~~~~~~~~~~~~~~~~~~~~~~~~~~~~~~~~~~~~~~~~~~~~~~~~~~~~~
~~~~~~~~~~~~~~~~~~~~~~~~~~~~~~~~~ if~~ i \in \{ 1,2,\cdots,d+1\}\ \\
(1, 0, \ldots, 0)
~~~~~~~~~~~~~~~~~~~~~~~~~~~~~~~~~~~~~~~~~~~~~~~~~~~~~~~~~~~~~~~~~~~~~~~~~~~~~~~~
~~~~~~~~~~~~~~~~~~~ if~~ i= d+2\\
(0, 1,\ldots,
0)~~~~~~~~~~~~~~~~~~~~~~~~~~~~~~~~~~~~~~~~~~~~~~~~~~~~~~~~~~~~~~~~~~~~~~~~~~~~~~
 
~~~~~~~~~~~~~~~~~~~~~ if~~ i= d+3\\
\vdots 
~~~~~~~~~~~~~~~~~~~~~~~~~~~~~~~~~~~~~~~~~~~~~~~~~~~~~~~~~~~~~~~~~~~~~~~~~~~~~~~~
~~~~~~~~~~~~~~~~~~~~~~~~~~~~~~~~~~~~~~~ \vdots\\
(0, 0, \ldots, 
1)~~~~~~~~~~~~~~~~~~~~~~~~~~~~~~~~~~~~~~~~~~~~~~~~~~~~~~~~~~~~~~~~~~~~~~~~~~~~~~
 
~~~~~~~~~~~~~~~~~~~~~ if~~ i= 2d\\
\end{cases} 
$}
\end{lemma}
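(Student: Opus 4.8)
The plan is to mimic the computation in Lemma~\ref{lem:d3g} verbatim, now with $m = 2d$ so that the null space of $M(A)$ has dimension $m - d - 1 = d-1$; the Gale Transform vectors $v_i$ therefore live in $\mathbb{R}^{d-1}$, and the $\ell$-th coordinate of each $v_i$ will be governed by the node $t_{d+1+\ell}$ for $\ell = 1, \ldots, d-1$. I would first split the $(d+1)\times 2d$ matrix $M(A)$ into its first $d+1$ columns, call this block $V$, and its last $d-1$ columns, call this block $W$. The block $V$ is a $(d+1)\times(d+1)$ matrix whose rows are the power rows $t, t^2, \ldots, t^d$ together with the all-ones row; since $t_1 < \cdots < t_{d+1}$ are distinct it is a (row-permuted) Vandermonde matrix and hence invertible.

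Next I would parametrize the null space by the last $d-1$ coordinates. Writing a null vector as $(\gamma', \gamma'')$ with $\gamma' \in \mathbb{R}^{d+1}$ and $\gamma'' \in \mathbb{R}^{d-1}$, the equation $M(A)\gamma = \vec{0}$ reads $V\gamma' + W\gamma'' = \vec{0}$, i.e. $\gamma' = -V^{-1}W\gamma''$. Setting $\gamma''$ equal to each standard basis vector $e_\ell \in \mathbb{R}^{d-1}$ in turn produces $d-1$ null vectors; because their $\gamma''$-parts are $e_1, \ldots, e_{d-1}$ they are automatically linearly independent and thus form a basis of the null space. This is exactly why $v_{d+1+\ell}$ comes out to be the standard basis vector with the $1$ in the $\ell$-th slot, matching the stated cases $i = d+2, \ldots, 2d$.

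The only real content is computing, for $\gamma'' = e_\ell$ (which picks out the node $t_s$ with $s = d+1+\ell$), the entries $\gamma'_i$ for $1 \le i \le d+1$. By Cramer's rule $\gamma'_i = -\det V_i / \det V$, where $V_i$ is $V$ with its $i$-th column replaced by $(t_s, t_s^2, \ldots, t_s^d, 1)^{\top}$; both determinants are row-permuted Vandermonde determinants, so the overall sign $(-1)^d$ coming from permuting the all-ones row to the bottom cancels in the ratio, and all Vandermonde factors not touching column $i$ cancel as well. What survives is precisely $(-1)^{d+1}\prod_{j \neq i}(t_s - t_j) / \prod_{k \neq i}(t_k - t_i)$ with $j, k$ ranging over $\{1, \ldots, d+1\}\setminus\{i\}$, once one collects the sign $(-1)^{d+1-i}$ from reordering the factors $(t_s - t_b)$ with $b > i$ in the numerator and the sign $(-1)^{i-1}$ from the corresponding factors in the denominator. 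I would then record each such $\gamma'_i$ as the $\ell$-th coordinate of $v_i$, reproducing the first case of the claimed formula.

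The main obstacle, and the step I would be most careful with, is the bookkeeping of these signs in the determinant ratio: correctly tracking the $(-1)^d$ from the row permutation of $V$ and of $V_i$ (which must cancel), together with the $(-1)^{d+1-i}$ and $(-1)^{i-1}$ factors arising from rewriting $\prod(t_s - t_b)$ and $\prod(t_k - t_i)$ in the sign convention of the statement, so that the net residual sign is exactly the leading $(-1)^{d+1}$ appearing in the Lemma. Everything else is an exact copy of the argument in Lemma~\ref{lem:d3g}, simply carried out for the $d-1$ free coordinates $t_{d+2}, \ldots, t_{2d}$ rather than the two coordinates $t_{d+2}, t_{d+3}$; hence I would state it as such and only spell out the generic $\ell$-th coordinate, noting at the end that the linear independence of the $e_\ell$ guarantees these vectors form a basis of the null space.
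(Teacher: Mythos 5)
Your proposal is correct and follows essentially the same route as the paper: the paper likewise rearranges $M(A)\gamma = \vec{0}$ into $(\gamma_1,\ldots,\gamma_{d+1})^{\top} = -V^{-1}W(\gamma_{d+2},\ldots,\gamma_{2d})^{\top}$ and substitutes the standard basis vectors $e_r$ for the free block to read off the $d-1$ basis vectors of the null space. The only difference is that you spell out the Cramer's-rule/Vandermonde sign computation that the paper leaves implicit, and your sign bookkeeping does land on the correct residual factor $(-1)^{d+1}$.
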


\begin{proof}
Let us consider the matrix $M(A)$. 
\begin{center}
$
M(A) = 
\begin{bmatrix}
t_1&t_2&\cdots & t_{2d}  \\
(t_1)^2&(t_2)^2&\cdots & (t_{2d})^2 \\
\vdots & \vdots & \vdots & \vdots\\ 
(t_1)^d&(t_2)^d&\cdots & (t_{2d})^d \\
1 & 1 & \cdots & 1
\end{bmatrix}
$
\end{center}

\noindent To compute the basis of this null space, we solve the following 
equation.
\vspace{-0.5cm}
\begin{center}
\begin{equation}\label{eqn:mmnt2d}
 \begin{bmatrix}
t_1&t_2&\cdots & t_{2d}  \\
(t_1)^2&(t_2)^2&\cdots & (t_{2d})^2 \\
\vdots & \vdots & \vdots & \vdots\\ 
(t_1)^d&(t_2)^d&\cdots & (t_{2d})^d \\
1 & 1 & \cdots & 1
 \end{bmatrix}
     \begin{bmatrix}
      \gamma_1 \\
       \gamma_2 \\
        \vdots\\
       \gamma_{2d}\\
  \end{bmatrix}
  =
  \begin{bmatrix}
       0 \\
      0 \\
      \vdots\\
      0
  \end{bmatrix}
\end{equation}
\end{center}
Rearranging Equation \ref{eqn:mmnt2d}, we get the following:\\

\scalebox{0.9}
{$
 \begin{bmatrix}
      \gamma_1 \\
       \gamma_2 \\
        \vdots\\
       \gamma_{d+1}\\
  \end{bmatrix} =
  -\begin{bmatrix}
  t_1&t_2&\cdots & t_{d+1}  \\
(t_1)^2&(t_2)^2&\cdots & (t_{d+1})^2 \\
\vdots & \vdots & \vdots & \vdots\\ 
(t_1)^d&(t_2)^d&\cdots & (t_{d+1})^d \\
1 & 1 & \cdots & 1
\end{bmatrix}^{-1}
\begin{bmatrix}
 t_{d+2}&t_{d+3}& \cdots& t_{2d}    \\
(t_{d+2})^2&(t_{d+3})^2 &\cdots& (t_{2d})^2 \\
\vdots & \vdots & \vdots & \vdots \\ 
(t_{d+2})^d&(t_{d+3})^d &\cdots& (t_{2d})^d \\
1 & 1 & 1 & 1
\end{bmatrix}
\begin{bmatrix}
 \gamma_{d+2} \\
  \gamma_{d+3} \\
  \vdots\\
  \gamma_{2d}
\end{bmatrix}
$
}\\

Consider an $r$ satisfying $1 \leq r\leq d-1$. For this $r$, we obtain the 
values of $\gamma_1$, $\gamma_2$, $\ldots $, $\gamma_{d+1}$ by 
setting $(\gamma_{d+2}, \gamma_{d+3}, ... , \gamma_{2d}) = e_r$, where $e_r$ is 
the $r^{th}$ row of the identity matrix $I_{d-1}$.

\begin{center}
$\gamma_{i} =$
$(-1)^{d+1}{\dfrac{\prod\limits_{j\in{\{1,2,\cdots,d+1\}\setminus 
\{i\}}} 
(t_{d+1+r}-t_j)}{\prod\limits_{k\in{\{1,2,\cdots,d+1\}\setminus \{i\}}} 
(t_k-t_i)}}$ 
\end{center}
For $d-1$ different values of $r$, the $d-1$ vectors obtained in the 
above-mentioned way form a basis of the 
null space of the row space of M(A). Hence, the result follows.
\end{proof}
\section{Lower Bound on the $d$-Dimensional Rectilinear Crossing 
Number of $K^d_{2d}$}
\label{gencase}
In this Section, we improve the lower bound on $c_d$ by using the Proper 
Separation 
Theorem and Lemma \ref{extension} mentioned below. Recall that $c_d$ denotes 
the $d$-dimensional rectilinear crossing number of $K^d_{2d}$. Let $P$ denote 
the set of $2d$ vertices of $K_{2d}^d$ that are in general position in 
$\mathbb{R}^d$. Two nonempty convex sets $C$ and $D$ in $\mathbb{R}^d$ are said 
to be 
\textit{properly separated} if there exists a $(d-1)$-dimensional hyperplane 
$h$ such 
that $C$ and $D$ lie in the opposite closed half-spaces determined by $h$, and 
$C$ and $D$ are not both contained in the hyperplane $h$ \cite{OG}.

\vspace{.1cm}

\noindent{\textbf{Proper Separation Theorem.}~\cite{OG}} 
\textit{Two nonempty convex 
sets $C$ and $D$ in $\mathbb{R}^d$ can be properly separated if and only if 
their 
relative interiors are disjoint.}
\begin{lemma}
\label{extension} 
Consider two disjoint point sets $U, V \subset P$ such that $|U|= p$, $|V|= 
q$, $2 \leq p, q \leq d$ and $p+q \geq d+1$. If the $(p-1)$-simplex formed by 
$U$ crosses the 
$(q-1)$-simplex formed by $V$, then the $(d-1)$-simplices formed by any 
two disjoint point sets $U'\supseteq U$ and $V' \supseteq V$ satisfying $|U'| = 
|V'| =d$ also form a crossing pair.
\end{lemma}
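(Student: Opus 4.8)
The plan is to prove this by contradiction using the Proper Separation Theorem stated just above the lemma. Write $\sigma = Conv(U)$ for the $(p-1)$-simplex and $\tau = Conv(V)$ for the $(q-1)$-simplex, and let $\sigma' = Conv(U')$ and $\tau' = Conv(V')$ be the two $(d-1)$-simplices obtained after the extension. Since $U \subseteq U'$ and $V \subseteq V'$, we immediately get the containments $\sigma \subseteq \sigma'$ and $\tau \subseteq \tau'$. Recall that, by definition, $\sigma$ crossing $\tau$ means $U \cap V = \emptyset$ together with $\mathrm{relint}(\sigma) \cap \mathrm{relint}(\tau) \ne \emptyset$. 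The goal is to establish the analogous statement for $\sigma'$ and $\tau'$, which, combined with $U' \cap V' = \emptyset$, yields exactly the claimed crossing.

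First I would assume, towards a contradiction, that $\sigma'$ and $\tau'$ do not cross, i.e. that $\mathrm{relint}(\sigma') \cap \mathrm{relint}(\tau') = \emptyset$. By the Proper Separation Theorem, $\sigma'$ and $\tau'$ can then be properly separated: there is a hyperplane $h$ placing $\sigma'$ and $\tau'$ in the two opposite closed half-spaces bounded by $h$, with $\sigma'$ and $\tau'$ not both contained in $h$. The next step is to restrict this separation to the sub-simplices: since $\sigma \subseteq \sigma'$ and $\tau \subseteq \tau'$, the same hyperplane $h$ puts $\sigma$ and $\tau$ in opposite closed half-spaces, so $h$ separates $\sigma$ and $\tau$ as well.

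The crux is then to turn this into a contradiction. Because $\sigma$ and $\tau$ cross, their relative interiors meet, and hence by the Proper Separation Theorem $\sigma$ and $\tau$ \emph{cannot} be properly separated. Yet $h$ does separate them into opposite closed half-spaces; the only way for this separation to fail to be proper is for both $\sigma$ and $\tau$ to be entirely contained in $h$. This forces every vertex of $U \cup V$ to lie on $h$. Since $|U \cup V| = p + q \ge d+1$, we would then have at least $d+1$ points of $P$ on a common hyperplane, contradicting the general position of $P$. Therefore $\sigma'$ and $\tau'$ must cross.

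I expect the main obstacle to be the careful treatment of the degenerate case built into the definition of proper separation, namely ruling out that the separating hyperplane of $\sigma'$ and $\tau'$ swallows the lower-dimensional faces $\sigma$ and $\tau$. This is precisely where both hypotheses enter: the bound $p + q \ge d+1$ makes $U \cup V$ too large to sit on a single hyperplane, and general position converts "too large" into an outright contradiction. A secondary point to verify is that $\sigma$, $\tau$, $\sigma'$, and $\tau'$ are genuine nondegenerate simplices; this follows because general position forces every set of at most $d$ points of $P$ to be affinely independent (otherwise such a dependent set, together with one more point of $P$, would place $d+1$ points on a hyperplane).
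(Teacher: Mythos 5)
Your proof is correct and follows essentially the same route as the paper's: assume the extended simplices do not cross, apply the Proper Separation Theorem to get a hyperplane $h$ separating them (hence separating $Conv(U)$ and $Conv(V)$ into opposite closed half-spaces), observe that since the smaller simplices' relative interiors meet they cannot be properly separated, so both must lie inside $h$, and then the $p+q \ge d+1$ points on a common hyperplane contradict general position. The only cosmetic difference is that the paper first splits off the case $Conv(U')\cap Conv(V')=\emptyset$ before invoking proper separation, whereas you handle everything uniformly.
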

\begin{proof}
For the given disjoint sets $U= \{x_1, x_2, \ldots, x_p\}$ and $V=\{y_1, 
y_2, \ldots, y_q\}$, assume for the sake of contradiction 
that there exist two $(d-1)$-simplices, formed respectively by the disjoint 
point sets 
$U^\prime \supseteq U$ and $V^\prime \supseteq V$, that do not cross. If 
$Conv(U^\prime) \cap Conv(V^\prime) = \phi$, then it is easy to observe that
$Conv(U) \cap Conv(V) = \phi$. This leads to a contradiction. Otherwise, 
consider the convex sets $Conv(U^\prime)$ and $Conv(V^\prime)$. Since 
$Conv(U^\prime)$ and $Conv(V^\prime)$ do not cross, their relative 
interiors are disjoint. The Proper Separation Theorem guarantees 
that there exists a $(d-1)$-dimensional hyperplane h such that 
$Conv(U^\prime)$ and $Conv(V^\prime)$ lie in the opposite closed half-spaces 
determined by $h$, which further implies that $Conv(U)$ and $Conv(V)$ lie in 
the opposite closed half-spaces determined by $h$. As 
$Conv(U)$ and $Conv(V)$ have a common point in their relative interiors, 
$U$ and $V$ cannot be properly separated by $h$. It implies that all the 
$p+q$ points of $U \cup V$ lie on the $(d-1)$-dimensional hyperplane $h$. Since 
the $p+q \geq d+1$ points in $U \cup V$ are in general position, this leads to 
a contradiction.
\end{proof}

\noindent\textbf{Proof of Theorem \ref{thm11}:}
Consider the hypergraph $K^d_{2d}$ whose vertices are in general position in 
$\mathbb{R}^d$, and let $A$ be any subset of $d+3$ vertices selected from these 
vertices. The Gale Transform $D(A)$ of the point set $A$ 
contains $d+3$ vectors in $\mathbb{R}^2$, which can also be considered as a 
sequence 
of $d+3$ points (as mentioned in Section \ref{tused}). In order to apply the 
Ham-Sandwich Theorem (mentioned in the Introduction) in $\mathbb{R}^2$, we 
assign the 
points in $D(A)$ to $P_1$ and the origin to $P_2$ to obtain a line $l$ passing 
through the origin that bisects the points in $D(A)$ such that each 
partition (open half-space) contains at most $\floor{\frac{1}{2}|D(A)|}$ points 
from $D(A)$. 
Since the points in $A$ are in general position, every pair of vectors in 
$D(A)$ 
spans $\mathbb{R}^2$. Hence, at most one point from $D(A)$ can lie on $l$. 
As a consequence, $l$ can be rotated using the origin as the axis of rotation 
to obtain a proper linear 
separation of $D(A)$ into 2 subsets 
$l_1^+$ and $l_1^-$ of size $\floor{\frac{d+3}{2}}$ and 
$\ceil{\frac{d+3}{2}}$, respectively, such that $l_1^+$ denotes the positive 
(counter-clockwise) side and $l_1^-$ denotes the negative (clockwise) side of 
$l$. Lemma~\ref{bjection} implies that this proper linear separation 
corresponds to a crossing pair of a 
$(\floor{\frac{d+3}{2}}-1)$-simplex and a $(\ceil{\frac{d+3}{2}}-1)$-simplex in 
$\mathbb{R}^d$. We observe from Lemma~\ref{extension} that this crossing pair 
of simplices can be used to obtain ${d-3\choose \floor{\frac{d-3}{2}}}$ 
distinct crossing pairs of $(d-1)$-simplices formed by the vertices of the 
hypergraph $K_{2d}^d$. 

We rotate $l$ clockwise using the origin as the axis of 
rotation, until one of the $d+3$ points in $D(A)$ moves from one side of the 
line $l$ to the other side. Since every pair of vectors in $D(A)$ spans 
$\mathbb{R}^2$, it 
can be observed that exactly one point of $D(A)$ can change its side at any 
particular time during the rotation of $l$. We further rotate $l$ clockwise to 
obtain another new partition $\{l_2^+,l_2^-\}$, each having at least 
$\floor{\frac{d+1}{2}}$ points, at the instance a point in either $l_1^+$ or 
$l_1^-$ changes its side. This new linear separation corresponds to a crossing 
pair of simplices 
in $\mathbb{R}^d$, which can be used to obtain at least ${d-3 
\choose\floor{\frac{d-5}{2}}}$ distinct crossing 
pairs of $(d-1)$-simplices formed by the vertices of the hypergraph $K_{2d}^d$. 
Note that all the crossing pairs of simplices obtained by extending the 
partitions $\{l_1^+,l_1^-\}$  and $\{l_2^+,l_2^-\}$ are distinct.
Continuing in this manner for any $1 \leq k \leq \floor{\frac{d-3}{2}}-1$, we 
rotate $l$ clockwise to obtain a new partition $\{l_{k+1}^+$, $l_{k+1}^-\}$, 
each having at least 
$\floor{\frac{d-2k+3}{2}}$ points, at any time a point in either $l_{k}^+$ or 
$l_{k}^-$ changes its side. Therefore, the corresponding crossing pair of 
simplices in $\mathbb{R}^d$ can be extended to crossing pairs of 
$(d-1)$-simplices in 
at least ${d-3 \choose {d-\floor{\frac{d-2k+3}{2}}}}=$ ${d-3 \choose 
\floor{\frac{d-2k-3}{2}}}$ distinct ways. Hence, the number of crossing 
pairs of $(d-1)$-simplices obtained using this method is at least
$${d-3 \choose \floor{\frac{d-3}{2}}} + {d-3 \choose \floor{\frac{d-5}{2}}} + 
{d-3\choose \floor{\frac{d-7}{2}}} + ... + {d-3 \choose 1} = \Theta (2^d). 
$$

\section{Number of Crossing Pairs of Hyperedges of 
$K^d_{2d}$ on the Moment Curve}
\label{cncp}

In this Section, we obtain the value of $c_d^m$. Recall that $c_d^m$ 
denotes the number of crossing pairs of hyperedges of $K^d_{2d}$, when all the 
$2d$ vertices of $K^d_{2d}$ are placed on the $d$-dimensional moment curve. We 
first prove a lower bound on $c_d^m$ using an approach similar to the proof of 
Theorem \ref{thm11} and show later that this bound can be improved by using other
techniques to obtain the exact value of $c_d^m$. Let 
$A$ be a subset of any $d+3$ vertices selected from these $2d$ vertices. To 
establish a lower bound on $c_d^m$, we count the number of linear separations 
of 
the vectors in $D(A)$, i.e., the Gale Transform of $A$, obtained in 
Lemma~\ref{lem:d3g}. For the given $A=$ $\big<(t_1, (t_1)^2,$ 
$\ldots,(t_1)^d),$ 
$(t_2, (t_2)^2, \ldots, 
(t_2)^d),$ $\ldots,$ $
(t_{d+3}, (t_{d+3})^2,$ $ \ldots, (t_{d+3})^d)\big>$, where $t_1 < t_2 < 
\ldots < t_{d+3}$, the $i^{th}$ vector $v_i$ in $D(A)$ is the following:\\\\
\scalebox{0.85}
{$v_i = 
\begin{cases}
\Bigg(
(-1)^{d+1}{\dfrac{\prod\limits_{j\in{\{1,2,\cdots,d+1\}\setminus 
\{i\}}} 
(t_{d+2}-t_j)}{\prod\limits_{k\in{\{1,2,\cdots,d+1\}\setminus \{i\}}} 
(t_k-t_i)}}, (-1)^{d+1}{\dfrac{\prod\limits_{j\in{\{1,2,\cdots,d+1\}\setminus 
\{i\}}} 
(t_{d+3}-t_j)}{\prod\limits_{k\in{\{1,2,\cdots,d+1\}\setminus \{i\}}} 
(t_k-t_i)}}\Bigg) &  if~~ i \in \{ 1,2,\cdots,d+1\} \\
(1, 0) &  if~~ i= d+2\\
(0, 1) &  if~~ i= d+3\\
\end{cases} 
$}\\ 
Note that for every 
$1\leq i \leq d+3$, each vector $v_i$ in $D(A)$ is represented as an ordered 
pair $(a_i, b_i)$ where $a_i,b_i \in\mathbb{R}$. We denote the slope of the 
vector 
$v_i$ as 
$s_i=\dfrac{b_i}{a_i}$. In order to count the number of linear separations, we 
observe the following properties of these vectors.
\begin{observation}
\label{obs:diffr}
The sequence of $2$-dimensional vectors $D(A)=$ $\big<v_1, v_2,$ 
$\ldots,$ $v_{d+3}\big>$ having slopes $\big<s_1, s_2, \ldots, s_{d+3}\big>$  
satisfies the following properties.
\begin{enumerate}
  \item[(i)] For any $1 \leq i \leq d+1$, $v_i$ lies in the first (third) 
quadrant if $d+1+i$ is odd (even).
  \item[(ii)] $\infty =s_{d+3}> s_1 > s_2 > \ldots > s_{d+1} > s_{d+2}= 0$.
\end{enumerate}
\end{observation}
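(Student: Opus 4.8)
The plan is to establish both properties by tracking signs in the explicit formula for $v_i = (a_i, b_i)$ obtained in Lemma~\ref{lem:d3g}, exploiting the fact that $t_{d+2}$ and $t_{d+3}$ are the two largest parameters.

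For property~(i), I would first determine the sign of each coordinate for $1 \leq i \leq d+1$. In the numerators of $a_i$ and $b_i$, every factor $t_{d+2} - t_j$ and $t_{d+3} - t_j$ with $j \in \{1, \ldots, d+1\} \setminus \{i\}$ is strictly positive, since $t_{d+2}$ and $t_{d+3}$ exceed all of $t_1, \ldots, t_{d+1}$. In the common denominator $\prod_{k \in \{1,\ldots,d+1\}\setminus\{i\}}(t_k - t_i)$, exactly the $i-1$ factors with $k < i$ are negative, so the denominator has sign $(-1)^{i-1}$. Combining these with the global factor $(-1)^{d+1}$, both $a_i$ and $b_i$ carry the same sign $(-1)^{d+1}(-1)^{i-1} = (-1)^{d+i}$. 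Hence $v_i$ lies in the first quadrant when $d+i$ is even and in the third quadrant when $d+i$ is odd; since $d+i$ is even (odd) exactly when $d+1+i$ is odd (even), this is precisely property~(i).

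For property~(ii), the two boundary values are immediate: $v_{d+2} = (1,0)$ gives $s_{d+2} = 0$ and $v_{d+3} = (0,1)$ gives $s_{d+3} = \infty$. For $1 \leq i \leq d+1$, forming $s_i = b_i / a_i$ cancels both the factor $(-1)^{d+1}$ and the denominator $\prod_{k\neq i}(t_k - t_i)$, leaving
\[
s_i = \frac{\prod_{j \in \{1,\ldots,d+1\}\setminus\{i\}}(t_{d+3}-t_j)}{\prod_{j\in\{1,\ldots,d+1\}\setminus\{i\}}(t_{d+2}-t_j)},
\]
a ratio of positive quantities, so every $s_i > 0$; this already yields $s_1 < \infty = s_{d+3}$ and $s_{d+1} > 0 = s_{d+2}$. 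It then remains only to prove the strict chain $s_1 > s_2 > \cdots > s_{d+1}$.

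To do that I would compare consecutive slopes. In the quotient $s_i / s_{i+1}$ every factor indexed by $j \notin \{i, i+1\}$ cancels, reducing the comparison to
\[
\frac{s_i}{s_{i+1}} = \frac{(t_{d+3}-t_{i+1})(t_{d+2}-t_i)}{(t_{d+3}-t_i)(t_{d+2}-t_{i+1})}.
\]
The crux is then the inequality $(t_{d+3}-t_{i+1})(t_{d+2}-t_i) > (t_{d+3}-t_i)(t_{d+2}-t_{i+1})$; expanding both sides, their difference collapses to the single product $(t_{d+3}-t_{d+2})(t_{i+1}-t_i)$, which is positive because the $t$'s are strictly increasing. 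This gives $s_i/s_{i+1} > 1$, hence $s_i > s_{i+1}$, completing the chain. None of the steps is conceptually deep---the whole argument is sign and factor bookkeeping---so the only place demanding care is this last cross-product comparison, where recognizing that the difference factors cleanly as $(t_{d+3}-t_{d+2})(t_{i+1}-t_i)$ is exactly what makes the monotonicity transparent.
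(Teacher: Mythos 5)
Your proof is correct and follows essentially the same route as the paper's own (omitted) argument: determine the common sign $(-1)^{d+i}$ of $a_i$ and $b_i$ from the factor counts to get the quadrant alternation, then show $s_i/s_{i+1}=\frac{(t_{d+3}-t_{i+1})(t_{d+2}-t_i)}{(t_{d+3}-t_i)(t_{d+2}-t_{i+1})}>1$ for the slope chain. Your explicit factorization of the cross-product difference as $(t_{d+3}-t_{d+2})(t_{i+1}-t_i)$ is a slightly more detailed justification of the final inequality than the paper gives, but the approach is identical.
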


\begin{lemma}
 \label{lem:upmomnt}
 $c_d^m= \Omega(2^d\sqrt{d})$.
\end{lemma}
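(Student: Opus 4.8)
The plan is to mirror the proof of Theorem~\ref{thm11}: fix a single subset $A$ of $d+3$ of the $2d$ vertices lying on the moment curve, pass to its Gale Transform $D(A)$ given by Lemma~\ref{lem:d3g}, sweep a line through the origin to enumerate the linear separations of $D(A)$, translate each separation into a crossing of simplices via Lemma~\ref{bjection}, and finally invoke Lemma~\ref{extension} to lift each such crossing to many crossing pairs of $(d-1)$-simplices among all $2d$ vertices. The extra $\sqrt d$ factor over the general bound will come entirely from the rigid structure of $D(A)$ recorded in Observation~\ref{obs:diffr}.

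First I would fix the combinatorics of the sweep. Since the $d+3$ points of $A$ are in general position, Lemma~\ref{genposi} guarantees that no two vectors of $D(A)$ are parallel, so their $d+3$ directions taken modulo $\pi$ are pairwise distinct; rotating a line $\ell$ through the origin by an angle $\pi$ thus produces exactly $d+3$ distinct ordered bipartitions of $D(A)$, one per arc between consecutive directions, with exactly one vector switching sides at each event. By Observation~\ref{obs:diffr} the vectors $v_{d+2}=(1,0)$ and $v_{d+3}=(0,1)$ sit on the axes, while $v_1,\dots,v_{d+1}$ alternate between the first and third quadrants in order of strictly decreasing slope. Consequently, as $\ell$ successively crosses the directions of $v_{d+1},v_d,\dots,v_1$, the flipping vector alternates between a first-quadrant vector (which leaves the positive half-plane) and a third-quadrant vector (which enters it). Hence the size of the positive side changes by $+1$ and $-1$ in strict alternation, and every one of these $\Theta(d)$ separations splits $D(A)$ into parts whose sizes lie within a window of width two around $(d+3)/2$.

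The core of the argument is then the counting. Each near-balanced separation into parts of sizes $p$ and $q=d+3-p$ with $p,q\in\{3,\dots,d\}$ corresponds, by Lemma~\ref{bjection}, to a crossing between a $(p-1)$-simplex and a $(q-1)$-simplex spanned by the vertices of $A$; and since $p+q=d+3\ge d+1$, Lemma~\ref{extension} extends it to crossing pairs of $(d-1)$-simplices among all $2d$ vertices in $\binom{d-3}{p-3}$ ways, one for each way of distributing the remaining $d-3$ vertices. Because $p$ stays within $O(1)$ of $(d+3)/2$, we have $\binom{d-3}{p-3}=\Theta\big(\binom{d-3}{\lfloor (d-3)/2\rfloor}\big)=\Theta(2^{d}/\sqrt d)$ for each of the $\Theta(d)$ separations. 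To add these contributions I would check disjointness: distinct arcs of the sweep give distinct unordered bipartitions of $A$ (complementary partitions occur only half a turn apart, hence never both within a single $\pi$-rotation), and a crossing $\{U',V'\}$ produced from a partition $\{P,Q\}$ of $A$ satisfies $\{U'\cap A,\,V'\cap A\}=\{P,Q\}$, so crossings from different separations are distinct, while the $\binom{d-3}{p-3}$ extensions of a fixed separation are pairwise distinct as well. Summing the disjoint contributions yields at least $\Theta(d)\cdot\Theta(2^{d}/\sqrt d)=\Omega(2^{d}\sqrt d)$ crossing pairs, as claimed.

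I expect the main obstacle to be making the ``strict alternation of the flip direction'' rigorous: one must verify from Lemma~\ref{lem:d3g} and Observation~\ref{obs:diffr} that consecutive vectors $v_i,v_{i+1}$ genuinely lie in opposite quadrants, and that along the rotation a first-quadrant vector leaves the positive half-plane exactly when a third-quadrant vector does not, and vice versa. This alternation is precisely what pins all $\Theta(d)$ separations to sizes near $(d+3)/2$, so that each carries the maximal extension factor $\Theta(2^d/\sqrt d)$; it is the structural feature of the moment curve that produces the $\sqrt d$ improvement over the $\Omega(2^d)$ bound of Theorem~\ref{thm11}, where the separations obtained from a single sweep become progressively unbalanced and their extension factors sum only to $\Theta(2^d)$.
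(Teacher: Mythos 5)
Your proposal is correct and follows essentially the same route as the paper's own proof: a single $(d+3)$-subset $A$, a line rotating about the origin in the plane of the Gale transform $D(A)$, the alternating-quadrant/ordered-slope structure of Observation~\ref{obs:diffr} to keep all $\Theta(d)$ successive separations nearly balanced, and Lemma~\ref{bjection} together with Lemma~\ref{extension} to lift each separation to $\Theta(2^d/\sqrt d)$ distinct crossing pairs of $(d-1)$-simplices. The only cosmetic difference is that the paper seeds the sweep with a Ham--Sandwich bisection while you derive the near-balance of every arc directly from the strict alternation of the flip direction; the counting of $\Theta(d)$ arcs, the binomial extension factor, and the disjointness argument coincide with the paper's.
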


\begin{proof}
Consider the vectors in $D(A)$, that can also be considered as a sequence of 
$d+3$ points in $\mathbb{R}^2$. We apply the Ham-Sandwich Theorem by assigning 
the 
points in $D(A)$ to $P_1$ and the origin to $P_2$ to obtain a line $l$ passing 
through the origin that bisects the points in $D(A)$ into two 
partitions, each containing at most $\floor{\frac{1}{2}|D(A)|}$ points.
Since at most one 
point from $D(A)$ can lie on $l$, it can 
be rotated using the origin as the axis of rotation to obtain a proper linear 
separation of $D(A)$ into 2 subsets 
$l_1^+$ (positive or counter-clockwise side) and $l_1^-$ (negative or 
clockwise side) of size $\floor{\frac{d+3}{2}}$ and $\ceil{\frac{d+3}{2}}$, 
respectively. This proper linear separation corresponds to a crossing pair of a 
$(\floor{\frac{d+3}{2}}-1)$-simplex and a $(\ceil{\frac{d+3}{2}}-1)$-simplex in 
$\mathbb{R}^d$, as shown in Lemma~\ref{bjection}. It follows from 
Lemma~\ref{extension} that this crossing pair of simplices can be used to 
obtain 
${d-3\choose \floor{\frac{d-3}{2}}}$ distinct crossing pairs of 
$(d-1)$-simplices formed by the vertices of the hypergraph $K_{2d}^d$. We 
rotate 
$l$ clockwise using the origin as the axis of rotation until one of the $d+3$ 
points in $D(A)$ moves from one side of the line to the other side to obtain 
new 
subsets $\{l_2^+,l_2^-\}$, each having at least $\floor{\frac{d+1}{2}}$ points. 
This new linear separation $\{l_2^+,l_2^-\}$ corresponds to a crossing pair of 
simplices 
in $\mathbb{R}^d$, which can be used to obtain at least 
${d-3 \choose\floor{\frac{d-5}{2}}}$ distinct crossing 
pairs of $(d-1)$-simplices formed by the vertices of the hypergraph $K_{2d}^d$. 
Note that all the crossing pairs of simplices obtained by extending the 
partitions $\{l_1^+,l_1^-\}$  and $\{l_2^+,l_2^-\}$ are distinct. Since all the 
$d + 3$ points of $A$ lie on the $d$-dimensional moment curve, Observation 
\ref{obs:diffr} implies that the 
sequence of vectors in $D(A)$, excluding $v_{d+2}$ and 
$v_{d+3}$, lie alternatively in the first and third quadrants with increasing 
slopes. As a consequence, another clockwise 
rotation of $l$ results in a point in $D(A)$ changing its 
side at some point of time from a side having more than or equal 
to $\ceil{\frac{d+3}{2}}$ points to the other side. 
This creates a new partition $\{l_{3}^+$, $l_{3}^-\}$, each containing at least 
$\floor{\frac{d+3}{2}}$ points. We continue rotating $l$ clockwise until we 
obtain the partition $\{l_1^+, l_1^-\}$ again. In this way, we obtain at least 
$2\floor{\frac{d+3}{2}}$ distinct partitions of $D(A)$ such that each subset in 
a partition contains at least $\floor{\frac{d+1}{2}}$ points. Hence, the number 
of crossing pairs of hyperedges spanned by the vertices of $K^d_{2d}$ placed on 
the $d$-dimensional moment curve is at least
$$2\floor{\frac{d+3}{2}}{d-3 \choose \ceil{\frac{d-5}{2}}} = 
\Theta (2^d \sqrt{d}).$$      
\end{proof}
However, we show below that this lower bound is far from being optimal. In 
fact, we use Lemma \ref{lem:dpach} and Lemma \ref{lem:dpach1} to prove Theorem 
\ref{momntup1} that implies 
$c_d^m= 
\Theta(\frac{4^d}{\sqrt{d}})$. Let us define the ordering between two points $p 
=\{t, (t)^2, \ldots, (t)^d\}$ and $p' = \{t', (t')^2, \ldots, (t')^d\}$ on the 
$d$-dimensional moment curve by $p \prec p'$ 
($p$ precedes $p'$) if $t < t'$. 
\begin{lemma}\cite{DP} \label{lem:dpach}
 Let $p_1 \prec p_2 \prec 
\ldots \prec p_{\floor{\frac{d}{2}}+1}$ and $q_1 \prec q_2 \prec \ldots \prec 
q_{\ceil{\frac{d}{2}}+1}$ be two 
distinct point sequences on the $d$-dimensional moment curve such that $p_i 
\neq q_j$ for any $1 \leq i \leq \floor{\frac{d}{2}}+1$ and $1 \leq j \leq 
\ceil{\frac{d}{2}}+1$. The 
$\floor{\frac{d}{2}}$-simplex and the $\ceil{\frac{d}{2}}$-simplex, 
formed respectively by these point sequences, cross if and only if every 
interval 
$(q_j , q_{j+1})$ contains exactly one $p_i$ and every interval $(p_i , 
p_{i+1})$ 
contains exactly one $q_j$.  
\end{lemma}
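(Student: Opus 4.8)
The plan is to reduce the statement to a one-dimensional sign condition via the Gale Transform, exactly in the spirit of the bijection of Lemma~\ref{bjection}. The combined collection of the $p_i$ and the $q_j$ consists of $m = d+2$ points on the moment curve; let $P$ denote this point set and $D(P) \subset \mathbb{R}^{m-d-1} = \mathbb{R}^1$ its Gale Transform, a sequence of $d+2$ reals which are nonzero by Lemma~\ref{genposi} (the moment-curve points are in general position). Setting $u = \floor{\frac{d}{2}}$ and $v = \ceil{\frac{d}{2}}$, so that $u+v+2 = m$, Lemma~\ref{bjection} says that the $\floor{\frac{d}{2}}$-simplex on the $p_i$ crosses the $\ceil{\frac{d}{2}}$-simplex on the $q_j$ if and only if there is a linear separation of $D(P)$ placing the $\floor{\frac{d}{2}}+1$ Gale vectors of the $p_i$ on one side and the $\ceil{\frac{d}{2}}+1$ Gale vectors of the $q_j$ on the other. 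A linear hyperplane through the origin in $\mathbb{R}^1$ is just the origin, whose two open half-spaces are the positive and negative reals; hence such a separation exists precisely when all Gale entries of the $p_i$ carry one sign and all Gale entries of the $q_j$ carry the opposite sign.

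Next I would compute the signs of $D(P)$. Writing the $d+2$ nodes in increasing order as $t_1 < t_2 < \cdots < t_{d+2}$, the same Vandermonde inversion used in Lemmas~\ref{lem:d3g} and~\ref{lem:2dmg} shows that the (one-dimensional) null space of $M(P)$ is spanned by the vector with entries $\gamma_i = 1/\prod_{j \neq i}(t_i - t_j)$; equivalently, this is the divided-difference identity $\sum_i t_i^k /\prod_{j \neq i}(t_i - t_j) = 0$ for $0 \le k \le d$, valid because $t^k$ has degree at most $d = (d+2)-2$. Counting the negative factors $t_i - t_j$, one for each $j > i$, gives $\operatorname{sign}(\gamma_i) = (-1)^{d+2-i}$, so the signs of consecutive Gale entries, taken in the moment-curve order, strictly alternate.

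It then remains to translate the sign condition back to the geometry of the curve. Since the signs alternate with position, ``all $p_i$ one sign and all $q_j$ the opposite sign'' holds if and only if the $p_i$ and the $q_j$ occupy positions of opposite parity in the merged order $t_1 < \cdots < t_{d+2}$, that is, the merged sequence alternates between $p$-type and $q$-type points. Finally I would note that this perfect alternation is equivalent to the stated interval condition: two $q$-points adjacent in the merged order would be consecutive in the $q$-ordering and bound an empty interval $(q_j,q_{j+1})$, contradicting that it contains exactly one $p_i$, and symmetrically for the $p$-points; so the interval conditions forbid every $pp$ and $qq$ adjacency and thus force alternation, while the converse is immediate. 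The crux of the argument is the sign computation of the previous paragraph, since everything downstream is formal; the only bookkeeping requiring care is that the parity classes have the right sizes, namely $\floor{(d+2)/2} = \floor{\frac{d}{2}}+1$ and $\ceil{(d+2)/2} = \ceil{\frac{d}{2}}+1$, which match the numbers of $p$- and $q$-points for both parities of $d$.
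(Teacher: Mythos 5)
The paper does not prove this lemma at all --- it is imported verbatim from Dey and Pach \cite{DP} --- so there is no in-paper argument to compare against; your proposal supplies a proof, and it is correct. Your route is the Gale/Radon-partition one: with $m=d+2$ points the Gale transform lands in $\mathbb{R}^{1}$, the null space of $M(P)$ is spanned by the divided-difference vector $\gamma_i=1/\prod_{j\neq i}(t_i-t_j)$ whose entries alternate in sign along the curve, and a linear separation in $\mathbb{R}^{1}$ is exactly a sign separation; Lemma~\ref{bjection} then converts this to the crossing statement, and the alternation-of-signs condition is visibly equivalent to the stated interval condition. All the details check out: the sizes $\floor{d/2}+1$ and $\ceil{d/2}+1$ sum to $d+2$; the hypotheses $1\le u,v\le d-1$ of Lemma~\ref{bjection} hold for all $d\ge 2$; the entries are nonzero by Lemma~\ref{genposi}; and the sign count $(-1)^{d+2-i}$ is right. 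This is arguably cleaner than the original argument in \cite{DP}, which separates non-alternating configurations by exhibiting a degree-$d$ polynomial (hence a hyperplane) with the right signs and rules out separation of alternating ones by counting roots; your version has the advantage of reusing exactly the machinery the paper already sets up in Section~\ref{tused}, and it yields the ``only if'' direction for free from the uniqueness of the Radon partition of $d+2$ points in general position rather than requiring a separate intersection argument. One presentational nit: you invoke Lemmas~\ref{lem:d3g} and~\ref{lem:2dmg} for the Vandermonde inversion, but those are stated for $m=d+3$ and $m=2d$; it would be cleaner to just verify the divided-difference identity $\sum_i t_i^k/\prod_{j\neq i}(t_i-t_j)=0$ for $0\le k\le d$ directly, as you in fact do.
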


\begin{lemma}\cite{DP} \label{lem:dpach1}
Let $P$ and $Q$ be two vertex-disjoint $(d-1)$-simplices such that each of the 
$2d$ 
vertices belonging to these simplices lies on the $d$-dimensional moment curve. 
If $P$ and $Q$ cross, then there exist a $\floor{\frac{d}{2}}$-simplex $U 
\subsetneq P$ and  another 
$\ceil{\frac{d}{2}}$-simplex $V \subsetneq Q$ such that $U$ and $V$ cross. 
\end{lemma}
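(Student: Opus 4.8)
The plan is to translate the crossing of $P$ and $Q$ into the language of affine dependences on the moment curve, extract a minimal such dependence, and read off the sub-simplices from its support via Lemma~\ref{lem:dpach}. Write $P=\{p_1,\ldots,p_d\}$, $Q=\{q_1,\ldots,q_d\}$, and lift each point $(t,t^2,\ldots,t^d)$ to $(1,t,t^2,\ldots,t^d)\in\mathbb{R}^{d+1}$. Since $P$ and $Q$ cross, their relative interiors share a point, and writing that point as a positive convex combination in both simplices yields an affine dependence $c$ among the $2d$ lifted points whose coefficients are strictly positive on $P$ and strictly negative on $Q$, with no zero entries. Thus $c$ lies in the kernel of the $(d+1)\times 2d$ Vandermonde-type matrix, i.e. $\sum_k c_k t_k^{\,j}=0$ for $0\le j\le d$, and carries the prescribed sign pattern $\sigma$ (positive on $P$, negative on $Q$).

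The first key step is a sign-change bound: every nonzero $w$ in this kernel exhibits at least $d+1$ sign changes when its nonzero entries are read in increasing order of $t$. Indeed, were there at most $d$ sign changes, I could construct a polynomial $g$ of degree at most $d$ whose sign at each support point $t_k$ matches that of $w_k$, by placing one root strictly between each consecutive pair of support points at which the sign flips (at most $d$ roots, hence degree at most $d$). Then $\sum_k w_k\,g(t_k)>0$ because every term is strictly positive, contradicting $\sum_k w_k\,g(t_k)=\sum_{j\le d} g_j\sum_k w_k t_k^{\,j}=0$. This is exactly the Vandermonde/polynomial mechanism already exploited in Lemma~\ref{lem:d3g}.

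Next, among all nonzero kernel vectors that conform to $\sigma$ (each entry of sign $\sigma_k$ or zero), I choose one, $w$, of minimal support; the set $c$ belongs to is nonempty, so such a $w$ exists. By the sign-change bound $\lvert\operatorname{supp}(w)\rvert\ge d+2$. If it were strictly larger, the kernel vectors supported inside $\operatorname{supp}(w)$ would form a space of dimension at least two, so I could pick a second such vector $w'$ and slide $w-t\,w'$ away from $t=0$ until some coordinate first vanishes; the result still conforms to $\sigma$ but has strictly smaller support, contradicting minimality. Hence $\lvert\operatorname{supp}(w)\rvert=d+2$, and a support of exactly $d+2$ points carrying at least $d+1$ sign changes must alternate in sign completely. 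Since $w$ conforms to $\sigma$, the support points therefore alternate between $P$ and $Q$ along the curve, so $U:=\operatorname{supp}(w)\cap P$ and $V:=\operatorname{supp}(w)\cap Q$ perfectly interleave and Lemma~\ref{lem:dpach} shows that the simplices they span cross.

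It remains to match the cardinalities. The $d+2$ alternating support points split as $\lceil (d+2)/2\rceil$ and $\lfloor (d+2)/2\rfloor$. When $d$ is even both equal $d/2+1=\lfloor d/2\rfloor+1=\lceil d/2\rceil+1$, so $U$ is a $\lfloor d/2\rfloor$-simplex inside $P$ and $V$ a $\lceil d/2\rceil$-simplex inside $Q$ exactly as claimed. The odd case is where I expect the real difficulty: the support then has odd size $d+2$, so it begins and ends with the same sign, and the larger ($\lceil d/2\rceil$) part lands on whichever of $P,Q$ owns the two extreme support points. Whether the floor-part falls in $P$ or in $Q$ is dictated by the signs of these extreme points and cannot be forced in general, so the stated assignment of the smaller simplex to $P$ is the main obstacle. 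The clean resolution is that the crossing relation is symmetric in $P$ and $Q$: after possibly interchanging their roles one recovers the stated $\lfloor d/2\rfloor$-simplex in $P$ and $\lceil d/2\rceil$-simplex in $Q$, with no interchange needed when $d$ is even.
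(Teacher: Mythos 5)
The paper offers no proof of this lemma at all---it is imported verbatim from Dey and Pach \cite{DP}---so there is nothing internal to compare your argument against; I evaluate it on its own. Taken on its own, your proof is correct, and it is essentially the canonical argument for statements of this kind on the moment curve: the crossing yields an affine dependence among the lifted points with prescribed signs (positive on $P$, negative on $Q$); the Vandermonde/Descartes sign-change bound forces at least $d+1$ sign changes, hence support of size at least $d+2$, in any nonzero dependence; passing to a dependence of minimal support conforming to the sign pattern pins the support down to exactly $d+2$ fully alternating points; and Lemma~\ref{lem:dpach} (or, even more directly, normalizing the positive and negative halves of the dependence) converts the alternation into a crossing of the spanned sub-simplices. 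The individual steps all check out: the dimension count $|S|-(d+1)\ge 2$ for a support $S$ with $|S|\ge d+3$ is valid because any $d+1$ columns of the lifted moment matrix form an invertible Vandermonde matrix, and the perturbation $w-tw'$ does terminate at a nonzero conforming vector of strictly smaller support, since $w'\ne 0$ forces some coordinate to vanish at finite $t$ in at least one direction and $w-tw'$ cannot vanish identically by linear independence.

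The caveat you raise about odd $d$ is a genuine defect of the statement, not of your proof. For $d=3$, take $P=\{t_1,t_3,t_6\}$ and $Q=\{t_2,t_4,t_5\}$: the two triangles cross (the subsequence $t_1,t_2,t_3,t_4,t_6$ alternates), but every interleaving $5$-point subsequence here has three points in $P$ and two in $Q$, so there is no $\floor{\frac{d}{2}}$-simplex in $P$ crossing a $\ceil{\frac{d}{2}}$-simplex in $Q$; the conclusion holds only after exchanging the roles of $P$ and $Q$. Your ``up to relabelling'' resolution is the right one, and it is all the paper actually needs: in the proof of Theorem~\ref{momntup1} the lemma is invoked only to conclude that a crossing of the $(d-1)$-simplices forces an alternating subsequence of $d+2$ vertices, a conclusion symmetric in $P$ and $Q$. (A smaller quibble with the statement itself rather than with your argument: for $d\le 3$ the inclusion $V\subsetneq Q$ cannot be proper, since $\ceil{\frac{d}{2}}+1=d$ there, so the strict inclusions should really be $\subseteq$.)
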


\noindent\textbf{Proof of Theorem \ref{momntup1}:} Let $\{C,D\}$ be a pair of 
disjoint vertex sets, each having $d$ vertices of $K^d_{2d}$ placed on the 
$d$-dimensional moment curve $\gamma = \{(t,t^2,t^3,\cdots,t^d): t 
\in\mathbb{R}\}$. 
Without loss of generality, let us assume that $C$ contains the first vertex 
(i.e., the vertex corresponding to the minimum value of $t$) of $K^d_{2d}$. 
Note that the number of such unordered pairs $\{C, D\}$ is $\frac{1}{2} {2d 
\choose d} = {2d-1 \choose d-1}$. Let us color the vertices in $C$ and $D$ by 
red and blue, respectively, to obtain $d$ partitions created by the red 
vertices. In particular, the first $d-1$ of these partitions are between two 
adjacent red vertices, and the last one is after the last red vertex. It 
implies 
from Lemma 
\ref{extension} and Lemma \ref{lem:dpach} that the pair of $(d-1)$-simplices 
formed by the vertices in $C$ and $D$ cross if there exists a sequence of 
$d+2$ vertices with alternating colors. Similarly, we obtain from Lemma 
\ref{lem:dpach} and Lemma \ref{lem:dpach1} that the pair of 
$(d-1)$-simplices 
formed by the vertices in $C$ and $D$ do not cross if there does not exist 
any sequence of $d+2$ vertices with alternating colors. 

When $d$ is even, the number of disjoint vertex sets $\{C,D\}$ that do not 
contain any subsequence of length $d+2$ having alternating colors is equal 
to the number of ways $d$ blue vertices can be distributed among $d$ partitions 
such that at most $\frac{d}{2}$ of the partitions are non-empty. This 
number is equal to 
$\sum\limits_{i=1}^{\frac{d}{2}}\dbinom{d}{i}\dbinom{d-1}{i-1}$. When 
$d$ is odd, the number of disjoint vertex sets $\{C,D\}$ 
that do not 
contain any subsequence of length $d+2$ having alternating colors is equal 
to the number of ways $d$ blue vertices can be distributed among $d$ partitions 
such that at most $\floor{\frac{d}{2}}$ of the first $d-1$ partitions are 
non-empty. This number is equal to 
$\sum\limits_{i=1}^{\floor{\frac{d}{2}}}\dbinom{d-1}{i}\Bigg(\dbinom{
d-1}{i-1} + \dbinom{d-1}{i}\Bigg)+1$. 
Hence, the total number of crossing pairs of $(d-1)$-simplices spanned by the 
$2d$ vertices placed on the $d$-dimensional moment curve is
\[c_d^m = \begin{cases}
\dbinom{2d-1}{d-1}-\sum\limits_{i=1}^{\frac{d}{2}}\dbinom{d}{i}\dbinom
{d-1 }{i-1} & if~d~is~even. \\
\dbinom{2d-1}{d-1} -1 - 
\sum\limits_{i=1}^{\floor{\frac{d}{2}}}\dbinom{d-1}{i}\dbinom{d}
{i} & if~d~is~odd.\\ 
\end{cases}\]

\section{Convex Crossing Number in Lower Dimensional Space}
\label{conjecture3d}

In this Section, we show that the number of crossing pairs of hyperedges of $K_6^3$ is $3$ when all the 
vertices of $K_6^3$ are in convex as well as general position in $\mathbb{R}^3$. Note that it implies that
$c^*_3 = 3$, where $c^*_3$ denotes the $3$-dimensional convex crossing number of $K_6^3$. However, we are not aware
of the exact values of $c^*_d$ for $d >3$.
\vspace{.2cm}

\noindent\textbf{Proof of Theorem \ref{convex3d}:}
Let $A$ be the set of vertices of $K_6^3$ that are in convex as well as general position in 
$\mathbb{R}^3$. Let $D(A)$ denote the Gale Transform of 
$A$. Since the points in $A$ are in general position, Lemma \ref{genposi} shows 
that the $6$ vectors in $D(A)$ are in general position in $\mathbb{R}^2$. Since 
the 
points in $A$ are also in convex position, Lemma 
\ref{convexity} implies that these vectors can be partitioned by a line $l$ 
passing through the origin in 
two possible ways, i.e., the number of vectors in the opposite open half-spaces 
created by $l$ can be either $4$ and $2$, or $3$ and $3$. Note that the second 
case is also known as a proper linear separation that corresponds to a crossing 
pair of $2$-simplices spanned by the points in $A$. Without loss of generality, 
let us assume that $l$ partitions the vectors in $D(A)$ in such a way that one 
of the open half-spaces created by $l$ contains $4$ vectors and the other 
contains $2$ vectors. We rotate $l$ clockwise using the origin as the axis of 
rotation until one vector changes its side. Since Lemma 
\ref{convexity} shows that $l$ cannot partition the vectors such that there 
exists $1$ vector on one of its side, this new partition obtained by rotating 
$l$ is a proper linear separation. We again rotate $l$ clockwise using the 
origin as the 
axis of rotation until one vector changes its side to obtain a new partition 
having $4$ vectors on one side and $2$ on the other side. We continue rotating 
$l$ clockwise till we reach the first 
partition to obtain three proper linear separations of the vectors in $D(A)$. 
\qed

\section{Concluding Remarks}
\label{conclusions}
In this paper, we have improved the lower bound on the $d$-dimensional 
rectilinear 
crossing number of $K_{2d}^d$. However, there is still a significant gap 
between the best-known asymptotic lower and upper bounds on this number. Moreover, Anshu and 
Shannigrahi \cite{SA} mentioned that the exact values of the $d$-dimensional 
rectilinear crossing number of $K_{2d}^d$ are not known for $d>4$. Similarly, we mentioned in this paper that 
the exact values of the $d$-dimensional convex crossing number of 
$K_{2d}^d$ are not known for $d > 3$. For $d > 3$, it is also an exciting open problem to prove or disprove 
the following conjecture.

\begin{conjecture}
The placement of $n$ vertices on the $d$-dimensional moment curve maximizes the number of crossing pairs of hyperedges in a $d$-dimensional 
convex drawing of $K^d_{n}$. 
\end{conjecture}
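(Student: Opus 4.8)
The plan is to reduce the conjecture, stated for all $n$, to a single extremal statement about $2d$ points, and then attack that statement through the Gale transform. First I would exploit the crossing decomposition already used in the Introduction: in any $d$-dimensional convex drawing of $K^d_n$, a crossing pair of hyperedges is vertex-disjoint and hence uses exactly $2d$ vertices, so the total number of crossings equals $\sum_{S} f(S)$, where $S$ ranges over the $\binom{n}{2d}$ subsets of size $2d$ and $f(S)$ is the number of crossing pairs among the $\binom{2d}{d}$ hyperedges spanned by $S$. Each such $S$ is itself in convex and general position, so $f(S)$ is the crossing count of \emph{some} convex drawing of $K^d_{2d}$. When the $n$ vertices lie on the moment curve, every $S$ is again a moment-curve configuration, so $f(S)=c_d^m$ for all $S$ and the total is exactly $c_d^m\binom{n}{2d}$. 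Thus the conjecture follows once we prove the $2d$-point statement: among all placements of $2d$ vertices in convex and general position in $\mathbb{R}^d$, the moment-curve placement maximizes the number of crossing pairs of $(d-1)$-simplices, i.e.\ $f(S)\le c_d^m$ always.

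For the reduced statement I would pass to the Gale transform $D(S)$, a configuration of $2d$ vectors in $\mathbb{R}^{d-1}$. By Lemma~\ref{bjection} with $u=v=d-1$ and $m=2d$, crossing pairs of $(d-1)$-simplices correspond bijectively to the balanced linear separations of $D(S)$, namely the central hyperplanes splitting the $2d$ vectors into two sets of size $d$; by Lemma~\ref{convexity}, convex position of $S$ is exactly the constraint that no central hyperplane isolates a single vector of $D(S)$. So $f(S)$ equals the number of balanced central separations of an admissible Gale configuration, and the task becomes showing that the cyclic-polytope Gale diagram maximizes this number. Since the total number of balanced vertex-disjoint pairs is $\binom{2d-1}{d-1}$ (as in the proof of Theorem~\ref{momntup1}), it is cleaner to prove the complementary statement that the moment curve \emph{minimizes} the number of non-crossing balanced pairs, whose minimum value is precisely the subtracted sum of Theorem~\ref{momntup1}.

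I would try two routes for this extremal step. The first is a combinatorial-characterization route: Lemmas~\ref{lem:dpach} and~\ref{lem:dpach1} describe moment-curve crossings by an alternation criterion (a balanced pair crosses iff its red/blue coloring has an alternating subsequence of length $d+2$ in the curve order). I would attempt to show that for \emph{any} convex-position point set there is a linear order of its vertices in which alternation of length $d+2$ is \emph{necessary} for a balanced pair to cross; since the number of colorings meeting this condition is exactly the quantity counted in Theorem~\ref{momntup1}, this yields $f(S)\le c_d^m$, with equality precisely when the configuration realizes all of them, as the moment curve does. The second is a deformation route: connect an arbitrary admissible Gale configuration to the cyclic one by a generic path and track how the count of balanced central separations changes as the configuration passes through degenerate (Radon) events, aiming to prove the cyclic diagram is a global maximizer, with the Upper Bound Theorem intuition that cyclic polytopes maximize face numbers as the guiding analogy.

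The hard part is exactly this extremal step, which is why the conjecture remains open for $d>3$. On the moment curve we have the rigid alternation criterion of Lemma~\ref{lem:dpach}, but a general convex configuration admits no such order-type normal form: a balanced pair may fail to cross without its coloring being non-alternating in any natural linear order, so the clean term-by-term count behind Theorem~\ref{momntup1} need not dominate an arbitrary configuration. The case $d=3$ is deceptively easy only because every convex drawing of $K_6^3$ forces exactly three balanced separations (Theorem~\ref{convex3d}), so the maximum is attained trivially; for $d>3$ the count genuinely varies across convex drawings, and establishing that the cyclic polytope's Gale diagram dominates all competitors, most plausibly through a monotonicity statement linking face numbers to balanced central separations, is the obstacle I expect to be decisive.
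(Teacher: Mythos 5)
This statement is a \emph{conjecture}: the paper offers no proof of it and explicitly lists it as open for $d>3$, so there is no paper proof to match, and your proposal --- by your own admission in its final paragraph --- does not close it either. What you have done correctly is the reduction: since crossing hyperedges are vertex-disjoint, the crossing count of any convex drawing of $K^d_n$ decomposes as $\sum_S f(S)$ over $2d$-subsets $S$ (each of which inherits convex and general position), every moment-curve subset realizes $f(S)=c_d^m$ by Theorem~\ref{momntup1}, and the correspondence of $f(S)$ with balanced linear separations of the Gale diagram via Lemma~\ref{bjection} (with $u=v=d-1$, $m=2d$) and Lemma~\ref{convexity} is accurate, as is the total count $\binom{2d-1}{d-1}$ of balanced pairs. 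So the conjecture is indeed equivalent to the single extremal statement $f(S)\le c_d^m$ for all $2d$-point convex configurations. But that extremal statement is the entire content of the conjecture, and neither of your two routes supplies it.

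Concretely: route one posits that every convex-position set admits a linear order of its vertices in which an alternating subsequence of length $d+2$ is \emph{necessary} for a balanced pair to cross. This is unsubstantiated, and there is no reason to expect it --- the alternation criterion of Lemma~\ref{lem:dpach} (with Lemma~\ref{lem:dpach1} and Lemma~\ref{extension}) is a rigidity property of points on a curve of order $d$, i.e., of the cyclic order type specifically; a general convex configuration in $\mathbb{R}^d$, $d\ge 4$, has no canonical vertex ordering, and a balanced pair can cross or fail to cross in ways uncorrelated with alternation in any fixed order, which is exactly the obstruction you name yourself. Route two (a generic deformation to the cyclic Gale diagram, tracking Radon events) lacks any monotonicity lemma: the number of balanced central separations is not obviously monotone or even controllably varying across degeneracies, and the Upper Bound Theorem analogy is motivation, not mechanism --- face numbers of the primal polytope do not translate into a bound on balanced separations of the Gale diagram. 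Your observation that $d=3$ is degenerate (Theorem~\ref{convex3d} forces every convex drawing of $K_6^3$ to have exactly $3$ crossings, matching $c_3^m=3$) is correct but gives no leverage for $d>3$, where $f(S)$ genuinely varies. In short: sound reduction, correct use of the paper's machinery, but the decisive step is missing, so this remains a research plan rather than a proof --- consistent with the paper, which proves only the moment-curve count (Theorem~\ref{momntup1}) and the $d=3$ case, and leaves the maximization claim open.
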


\section*{Acknowledgement}
This work has been supported by Ramanujan Fellowship, Department of Science and
Technology, Government of India, grant number SR/S2/RJN-87/2011.

\small

\end{document}